\newtheorem{theorem}{Theorem}[section]
\newtheorem{definition}{Definition}[section]
\newtheorem{example}{Example}[section]
\numberwithin{equation}{section}
\DeclareMathOperator{\rot}{rot}
\DeclareMathOperator{\Rot}{Rot}
\def\stretchx{\Bumpeq{\!\!\!\!\!\!\!\!{\longrightarrow}}}
\begin{document}

\title{{\Large Chaotic dynamics in the presence of medical malpractice litigation:\\ a topological proof via linked twist maps for two evolutionary game theoretic contexts} 
}

\vspace{-2mm}

\begin{author}
{Marina Pireddu\footnote{University of Milano\,-\,Bicocca, U5 Building, Via Cozzi 55, 20125 Milano, Italy. \qquad Tel.: +39\,0264485767. E-mail address: marina.pireddu@unimib.it}\\
{\footnotesize{Dept. of Mathematics and its
Applications, University of Milano\,-\,Bicocca, Milano, Italy.}}}
\end{author}
\date{}
\maketitle

\vspace{-9mm}

\begin{abstract}
\noindent
In the present work we reconsider the evolutionary game theoretic models by Antoci et al. (2016, 2018) describing the dynamic outcomes arising from the interactions between patients and physicians, whose behavior is subject to clinical and legal risks. In particular, Antoci et al. (2016) analyzed the case of positive defensive medicine, while Antoci et al. (2018) dealt with the case of negative defensive medicine.
We show that, when the models admit a nonisochronous center, it is possible to prove the existence of chaotic dynamics for the Poincar\'e map associated with those systems via the method of Linked Twist Maps (LTMs). To such aim we exploit in both frameworks, using a similar rationale, the periodic dependence on time of a model parameter influencing the position of the center and describing some risk associated with certain medical interventions, whose seasonal variation is empirically grounded.
We also provide the ecological interpretation of the same model, proposed by Harvie et al. (2007) and connected with intraspecific competition and environmental carrying capacity in 
predator-prey settings. In this case, it is sensible to assume a seasonal variation both for the carrying capacities and for the 
intrinsic growth rates of the two populations. Although such parameters influence the shape of the orbits, but do not affect the center position, we show that it is still possible to prove the existence of chaotic dynamics for the associated Poincar\'e map via the LTMs technique dealing with a different geometrical configuration for orbits in the phase plane.
\end{abstract}

\vspace{2mm}

\noindent \textbf{Keywords:} Hamiltonian system; nonisochronous center; seasonal effect; periodic coefficients; linked twist maps; chaotic dynamics.

\vspace{2mm}

\noindent \textbf{MSC classification:} 34C28, 34C35, 92C50, 37N25

\section{Introduction}\label{sec-1}
The aim of the present contribution consists in providing a topological proof of the existence of chaotic dynamics for a planar Hamiltonian system considered in different formulations and with different interpretations in the literature, by using the ``Linked Twist Maps'' (from now on, LTMs) technique. More precisely, the model that we are going to analyze has been proposed in two evolutionary game theoretic frameworks
by Antoci et al. in \cite{Anea-16,Anea-18} to describe the dynamic outcomes arising from the interactions between patients and physicians, whose behavior is subject to clinical and legal risks. Namely, both works deal with defensive medicine, i.e., the deviation from good medical practice motivated by the threat of liability claims, investigated e.g. in \cite{KeMC-96,Stea-05,TaBa-78}. In particular, in \cite{Anea-16}  the focus is on positive defensive medicine (see for instance \cite{Su-95}), according to which additional services are offered to discourage patients from
filing malpractice claims, or to convince the legal system that the standard of care was
met. On the other hand, the authors in \cite{Anea-18} consider
negative defensive medicine, that is the case in which physicians tend to avoid a source
of legal risk e.g. by adopting safer but less effective treatments (cf. \cite{Fe-12}).
A biological interpretation of the setting considered in \cite{Anea-16,Anea-18}, connected
with intraspecific competition and environmental carrying capacity
in predator-prey models, has been briefly suggested in \cite{Haea-07}, where however the focus is on the celebrated growth-cycle model by Goodwin \cite{Go-67,Go-72}, describing the dynamics of the wage share of output and the employment proportion. Actually, Harvie et al. in \cite{Haea-07} propose a system of differential equations in which each variable has both a positive and a
negative effect on its own growth rate. We will focus just on the latter, in order to obtain again the same formulation considered in \cite{Anea-16,Anea-18}.\\
The LTMs technique, that we are going to use in order to prove the existence of complex dynamics in the above described frameworks, is based on the Stretching Along the Paths (henceforth, SAP) method, i.e., the topological method for the search of fixed points
and periodic points for continuous maps defined on sets homeomorphic to the unit cube in finite
dimensional Euclidean spaces that expand the arcs along one
direction, developed in the planar case in \cite{PaZa-04a,PaZa-04b} and extended to the $N$-dimensional framework
in \cite{PiZa-07}. The context of LTMs represents a geometrical framework in which it is possible to employ the SAP method in order to detect complex dynamics, as first shown in \cite{PaZa-08,PiZa-08}. Further applications of the SAP method to planar LTMs contexts have been provided e.g. in \cite{BuZa-09,BuZa-10,PaZa-09}, while a biological application to a three-dimensional LTMs context has been proposed in  \cite{RZZa-14}.\footnote{We also recall the related works \cite{RZZa-15,ZaZa-09}, in which
 the existence of complex dynamics has been proved in a 2D and in a 3D continuous-time framework, respectively, by means of the SAP technique, without relying on the LTMs geometry.} Usual assumptions on the twist mappings 
(see e.g. \cite{BuEa-80,Pr-83,Pr-86}) concern, among others, their smoothness, preservation of the Lebesgue measure and
monotonicity of the angular speed with respect to the radial coordinate, also in view of checking an hyperbolicity condition, needed in order to ensure the existence of Smale horseshoes (cf. \cite{De-78}). On the other hand, since our approach is purely topological, we just need a twist condition on the boundary of the two linked annuli, similar to the Poincar\'e-Birkhoff fixed point theorem. Namely, by a linked twist map we mean the composition of two twist maps, acting each on one of the two linked annuli, which, in the two-dimensional case, cross along two (or more) planar sets homeomorphic to the unit square, that we call topological rectangles.\\ 
In our applications of LTMs, we will consider Hamiltonian systems with a nonisochronous center, in which the position of the center or the shape of the orbits vary when modifying one of the model parameters. In particular, we will modify parameters for which it is sensible to assume that they alternate in a periodic fashion between two different values, e.g. due to a seasonal effect. In this manner, we obtain two conservative systems, the original and the perturbed ones, and for each of them we can consider an annulus composed of energy levels. Under suitable conditions on the orbits, depending on the geometric configuration we are dealing with, the two annuli can cross in two or more generalized rectangles, and in that case we call them linked annuli.
In such context, the LTMs technique consists in finding two linked annuli, whose intersection sets contain chaotic sets for the Poincar\'e map obtained as composition of the Poincar\'e maps associated with the original system and the perturbed one. 
We stress that the nonisochronicity of the center is crucial in the just described procedure, because it implies that the orbits composing the linked annuli are run with a different speed, so that the Poincar\'e maps produce a twist effect on the annuli.
The consequent deformation produced on the generalized rectangles grows with the passing of time. Hence, if the switching times between the regimes governed by the original system and by the perturbed one are large enough, those generalized rectangles are transformed by the Poincar\'e maps into spiral-like sets, intersecting several times the same generalized rectangles, and the stretching along the path property required by the SAP method is fulfilled, guaranteeing the presence of chaotic sets inside the generalized rectangles.\\
We will use the LTMs technique both when dealing with the evolutionary game theoretic frameworks describing the dynamic effects of positive defensive medicine, proposed in \cite{Anea-16}, and of negative defensive medicine, investigated in \cite{Anea-18}, as well as when considering the context connected with intraspecific competition and environmental carrying capacity in predator-prey models, mentioned in \cite{Haea-07}.
Although all such settings are variants of the same model, due to the dissimilar meaning attached to the parameters in the three analyzed contexts, each time it will be sensible to periodically perturb a different parameter, and this in turn will generate a peculiar geometrical configuration, in which the LTMs method can be applied in a specific manner. Namely, in the frameworks from \cite{Anea-16, Anea-18} we will exploit the periodic dependence on time of the model parameters describing the risk associated with certain medical interventions, whose seasonal variation is empirically grounded, considering seasonality in hypertension (see e.g. \cite{Atea-08,Deea-12})
and its connection with perioperative adverse events (cf. for instance \cite{Hoea-04,Liuea-16}).
Despite the common rationale for assuming a periodic variation on those two parameters in the settings from \cite{Anea-16,Anea-18}, we stress that modifying the former or the latter produces a different effect on the position of the center. Moreover, orbits are run clockwise in the framework in \cite{Anea-16} and counterclockwise in the framework in \cite{Anea-18}. Also this difference will affect the proofs of the corresponding results about LTMs, in which we need to count the laps completed by suitable paths around the centers.
As concerns the biological setting, encompassing logistic terms which take into account intra-species interactions and the role of environmental resources, according e.g. to \cite{BoSt-15,NiGu-76}
 it is sensible to assume a seasonal variation both for the carrying capacities and for the 
intrinsic growth rates of the two populations. Even if such parameters influence the shape of the orbits, and raising the value of carrying capacities also enlarges the region in which orbits may lie, neither carrying capacities nor intrinsic growth rates affect the center position. Nonetheless, we show that it is still possible to prove the existence of chaotic dynamics for the associated Poincar\'e map via the LTMs technique dealing with a different geometrical configuration for orbits in the phase plane, in agreement with the results obtained in other contexts e.g. in \cite{BuZa-09,PaZa-13}. Regarding the nonisochronicity of the centers, it was proven in \cite{Sc-85,Sc-90} that the period of the orbits increases with the energy level in the settings that we are going to analyze.\footnote{In this respect, we also mention \cite{Maea-16}, where results about the period of small and large cycles have been obtained for a wide class of Hamiltonian systems, encompassing those here considered.} We finally stress that, like it happened e.g. in \cite{PiZa-08}, where the effect produced by a periodic harvesting on the original predator-prey model was investigated, also our results about the existence of chaotic dynamics are robust with respect to small perturbations, in $L^1$ norm, in the coefficients of the considered settings.\\
The remainder of the paper is organized as follows. In Section \ref{sec-2} we recall the main definitions and results connected with the LTMs method. In Section \ref{sec-3} we introduce the first two versions of the model that we are going to analyze, as presented in \cite{Anea-16,Anea-18}, respectively, and we prove that the Poincar\'e maps associated with them may generate chaotic dynamics via the method of LTMs, looking at the geometrical configurations of their orbits in the phase plane. In Section \ref{sec-4} we illustrate the ecological interpretation given in \cite{Haea-07} of the model considered in Section \ref{sec-3} and we describe an alternative way of applying the LTMs technique.
In Section \ref{sec-5} we briefly discuss our results and conclude.

\section{Recalling the Linked Twist Maps framework}\label{sec-2}

As explained in the Introduction, our theoretical starting point is given by the Stretching Along
the Paths (henceforth, SAP) method, developed in the planar case in \cite{PaZa-04a,PaZa-04b} and extended to the $N$-dimensional framework
in \cite{PiZa-07}. We shall see below that the SAP method is based on the SAP relation in \eqref{sapr}.
The context of ``Linked Twist Maps'' (LTMs, from now on) represents a geometrical framework in which it is possible to employ the SAP method in order to detect complex dynamics, as first shown in \cite{PaZa-08,PiZa-08}. In more detail, 
by a linked twist map we mean the composition of two twist maps, each acting on one of two annuli, which, in the two-dimensional case, cross along two (or more) planar sets homeomorphic to the unit square, that we call topological rectangles. For the two maps, we assume that they are homeomorphisms and that, like in the Poincar\'e-Birkhoff fixed point theorem, produce a twist effect on the boundary of the two annuli, leaving the boundary invariant. However, our approach is purely topological and for the maps we do not require neither area-preserving properties, nor the monotonicity of the twist with respect to the radial coordinate.\\ 
For brevity's sake, in what follows we will recall just the definitions and the results about the SAP relation that are necessary in view of our planar applications in Sections \ref{sec-3} and \ref{sec-4}. Further details and more general formulations can be found e.g. in \cite{Paea-08,PiZa-08}. Moreover, a three-dimensional LTMs context has been proposed in \cite{RZZa-14}.\\
A \textit{path} in $\mathbb R^2$ is a continuous map $\gamma:
[0,1]\to\mathbb R^2$ and we set $\overline{\gamma}:=\gamma([0,1]).$  
By a \textit{generalized rectangle} 
we mean a set ${\mathcal R}\subseteq\mathbb R^2$ which is homeomorphic to the
unit square $I^2:=[0,1]^2,$ through a homeomorphism $h: {\mathbb R}^2\supseteq I^2 \to \mathcal R\subseteq \mathbb R^2.$
We also set
$${\mathcal R}^{-}_{l}:= h([x_1 = 0])\,,\quad
{\mathcal R}^{-}_{r}:= h([x_1 = 1])$$ and call them the \textit{left} and
the \textit{right} sides of $\mathcal R,$ respectively. 
Setting $\mathcal R^{-}:= \mathcal R^{-}_{l}\cup \mathcal R^{-}_{r},$
we call the pair
$${\widetilde{\mathcal R}}:= (\mathcal R, \mathcal R^-)$$
an {\textit{oriented rectangle}} \textit{of $\mathbb R^2$}.\\
We are now in position to recall the definition of the {\it stretching along the paths} relation for maps between oriented rectangles.\\
Given ${\widetilde{\mathcal A}}:=
({\mathcal A},{\mathcal A}^-)$ and ${\widetilde{\mathcal B}}:=
({\mathcal B},{\mathcal B}^-)$ oriented rectangles of $\mathbb R^2,$ let $F: \mathcal A\to \mathbb R^2$ be a function
and ${\mathcal K}\subseteq {\mathcal A}$
be a compact set. We say that \textit{$({\mathcal K},F)$ stretches
${\widetilde{\mathcal A}}$ to ${\widetilde{\mathcal B}}$ along the
paths}, and write
\begin{equation}\label{sapr}
({\mathcal K},F): {\widetilde{\mathcal A}} \stretchx {\widetilde{\mathcal B}},
\end{equation}
if
\begin{itemize}
\item{} \; $F$ is continuous on ${\mathcal K}\,;$
\vspace{-2mm} 
\item{} \; for every path $\gamma: [0,1]\to {\mathcal A}$ with
$\gamma(0)$ and $\gamma(1)$ belonging to different components of ${\mathcal A}^-,$ there exists $[t',t'']\subseteq [0,1]$
such that $\gamma([t',t''])\subseteq {\mathcal K},$ $F(\gamma([t',t'']))\subseteq {\mathcal B},$ with $F(\gamma(t'))$ and
$F(\gamma(t''))$ belonging to different components of ${\mathcal B}^-.$
\end{itemize}
In the special case in which ${\mathcal K}={\mathcal A},$ we simply write $F: {\widetilde{\mathcal A}} \stretchx {\widetilde{\mathcal B}}.$

\smallskip

In our applications of LTMs, we will consider Hamiltonian systems with a nonisochronous center, in which the position of the center or the shape of the orbits vary when modifying one of the model parameters. In particular, we will modify parameters for which it is sensible to assume that they alternate in a periodic fashion between two different values, e.g. due to a seasonal effect. In this manner, we obtain two conservative systems and for each of them we can consider an annulus composed of energy levels. Under suitable conditions on the orbits, depending on the geometric configuration we are dealing with, the two annuli can cross in two or more generalized rectangles, that we orientate by suitably choosing how to name (as left and right) two among the arcs of orbits composing their boundary.
As we shall see in the next sections, the choice depends on the relative position of the generalized rectangles and on whether orbits are run clockwise or counterclockwise. The involved functions are the Poincar\'e maps associated with the two systems, and thus they are homeomorphisms.

The main result that we shall use in Sections \ref{sec-3} and \ref{sec-4} reads as follows:

\begin{theorem}\label{th}
Let $F: \mathbb R^2\supseteq D_{F}\to \mathbb R^2$ and
$G: \mathbb R^2\supseteq D_{G}\to \mathbb R^2$ be continuous maps defined on the sets $D_{F}$ and $D_{G},$ respectively. Let also
${\widetilde{\mathcal A}} := ({\mathcal A},{\mathcal A}^-)$ and
${\widetilde{\mathcal B}} := ({\mathcal B},{\mathcal B}^-)$ be oriented rectangles of $\mathbb R^2.$
Suppose that the following conditions are satisfied:
\begin{itemize}
\item[$\quad (C_F)\;\;$] There are (at least) two disjoint compact sets ${\mathcal H_0},\,{\mathcal H_1}\,\subseteq {\mathcal A}\cap D_{F}$
such that
$\displaystyle{({\mathcal H}_i,F): {\widetilde{\mathcal A}}
\stretchx\, {\widetilde{\mathcal B}}},$ for $i=0,1\,;$
\\
\item[$(C_G)\;\;$] ${\mathcal B}\subseteq D_{G}$ and
$G\,: {\widetilde{\mathcal B}} \stretchx {\widetilde{\mathcal A}}\,.$
\end{itemize}
Then if the map $\Phi:=G\circ F$ is continuous and injective on the set 
${\mathcal H}:=({\mathcal H}_0\cup{\mathcal H}_1)\cap{F}^{-1}(\mathcal B),$ setting
\begin{equation}\label{xin}
{X}_{\infty}:=\bigcap_{n=-\infty}^{\infty}\Phi^{-n}(\mathcal H),
\end{equation}
there exists a nonempty compact set
$${X}\subseteq {X}_{\infty} \subseteq {\mathcal H},$$
on which the following properties are fulfilled:

\begin{itemize}
\item[$(i)$] ${X}$
is invariant for $\Phi$ (i.e., $\Phi(X) = X$);

\item[$(ii)$] $\Phi\!\!\restriction_{X}$ is semi-conjugate to the two-sided Bernoulli shift on two symbols, i.e.,
there exists a continuous map $\pi$ from ${X}$ onto $\Sigma_2:=\{0,1\}^{\mathbb Z},$ endowed with the distance
\begin{equation*}
\hat d(\textbf{s}', \textbf{s}'') := \sum_{i\in {\mathbb Z}} \frac{|s'_i - s''_i|}{2^{|i| + 1}}\,,
\end{equation*}
for $\textbf{s}'=(s'_i)_{i\in {\mathbb Z}}$ and
$\textbf{s}''=(s''_i)_{i\in {\mathbb Z}}\in \Sigma_2\,,$
such that the diagram
\begin{equation*}
\begin{diagram}
\node{{X}} \arrow{e,t}{\Phi} \arrow{s,l}{\pi}
      \node{{X}} \arrow{s,r}{\pi} \\
\node{\Sigma_2} \arrow{e,b}{\sigma}
   \node{\Sigma_2}
\end{diagram}
\end{equation*}
commutes, i.e. $\pi\circ\Phi=\sigma\circ\pi,$ where $\sigma:\Sigma_2\to\Sigma_2$ is the Bernoulli
shift defined as $\sigma((s_i)_i):=(s_{i+1})_i,\,\forall
i\in\mathbb Z\,;$

\item[$(iii)$] the set of the periodic points of $\Phi\restriction_{{X}_{\infty}}$ is dense in ${X}$
and the preimage $\pi^{-1}(\textbf{s})\subseteq {X}$ of
every
$k$-periodic sequence $\textbf{s} = (s_i)_{i\in {\mathbb N}}\in \Sigma_2$
contains at least one $k$-periodic point.
\end{itemize}

Furthermore, from conclusion $(ii)$ it follows that:

\begin{itemize}
\item[$(iv)$] $$h_{\rm top}(\Phi)\ge h_{\rm top}(\Phi\restriction_{X})\geq h_{\rm top}(\sigma) = \log(2),$$
where $h_{\rm top}$ is the topological entropy;

\item[$(v)$] there exists a compact invariant set $\Lambda\subseteq {X}$ such that $\Phi\vert_{\Lambda}$ is
semi-conjugate to the two-sided Bernoulli shift on two symbols, topologically transitive and displays sensitive dependence on initial conditions.
\end{itemize}
\end{theorem}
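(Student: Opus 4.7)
The proof strategy is to reduce the two-annulus configuration to a single self-map setting. First I would observe that the composition $\Phi = G \circ F$ inherits an SAP self-relation on $\widetilde{\mathcal{A}}$ across each of the two disjoint compact sets $\mathcal{K}_i := \mathcal{H}_i \cap F^{-1}(\mathcal{B})$, $i = 0, 1$. Indeed, given any path $\gamma$ in $\mathcal{A}$ joining the two components of $\mathcal{A}^-$, hypothesis $(C_F)$ yields a subpath $\gamma|_{[t',t'']} \subseteq \mathcal{H}_i$ whose $F$-image lies in $\mathcal{B}$ and joins the two components of $\mathcal{B}^-$, and then $(C_G)$ applied to $F \circ \gamma|_{[t',t'']}$ returns a further subpath whose $\Phi$-image lies in $\mathcal{A}$ and joins the two components of $\mathcal{A}^-$. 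Hence $(\mathcal{K}_i, \Phi) : \widetilde{\mathcal{A}} \stretchx \widetilde{\mathcal{A}}$ for $i = 0, 1$, placing us within the self-stretching framework of \cite{PaZa-04a,PaZa-04b}.

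Having reduced to a self-map, I would iterate the covering property to realize all symbolic itineraries. By induction on $n$, for every finite word $(s_0, \ldots, s_{n-1}) \in \{0,1\}^n$ and every path $\gamma$ joining the two components of $\mathcal{A}^-$, there is a subpath $\gamma^{(n)}$ with $\Phi^j(\gamma^{(n)}(t)) \in \mathcal{K}_{s_j}$ for $j = 0, \ldots, n-1$, with $\Phi^{n-1}(\gamma^{(n)})$ still joining the two sides of $\mathcal{A}^-$. Taking continua of such subpaths, intersecting over $n$ by compactness, and using the injectivity of $\Phi$ on $\mathcal{H}$ to extend the construction to negative indices, gives that for every $\mathbf{s} \in \Sigma_2$ the set
\[
E_{\mathbf{s}} := \{\,x : \Phi^j(x) \in \mathcal{K}_{s_j}\ \text{for all}\ j \in \mathbb{Z}\,\}
\]
is nonempty. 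Setting $X := \bigcup_{\mathbf{s} \in \Sigma_2} E_{\mathbf{s}} \subseteq X_\infty$ and $\pi(x) := \mathbf{s}$ when $x \in E_{\mathbf{s}}$ (well-defined because $\mathcal{K}_0 \cap \mathcal{K}_1 = \emptyset$) yields a surjection $\pi : X \to \Sigma_2$ with $\pi \circ \Phi = \sigma \circ \pi$; continuity of $\pi$ follows from the fact that agreement of itineraries on a long block $|j| \le N$ forces points to lie in the $\Phi^{\pm j}$-preimages of a small neighbourhood, itself small by uniform continuity on the compact set $\mathcal{H}$. This establishes $(i)$ and $(ii)$.

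For the periodic points in $(iii)$, I would invoke the fixed-point companion of the SAP relation (a Brouwer/cutting-lemma consequence in \cite{PaZa-04a,PaZa-04b}): given a $k$-periodic itinerary $\mathbf{s}$, the set $\bigcap_{j=0}^{k-1} \Phi^{-j}(\mathcal{K}_{s_j})$ supports a sub-rectangle on which $\Phi^k$ stretches $\widetilde{\mathcal{A}}$ across itself, hence contains a fixed point of $\Phi^k$, which is a $k$-periodic point of $\Phi$ with itinerary $\mathbf{s}$. Density of these periodic points in $X$ follows by approximating any $\mathbf{s} \in \Sigma_2$ by its periodic truncations and using continuity. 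Item $(iv)$ is then immediate, since topological entropy does not decrease under semi-conjugacy and $h_{\mathrm{top}}(\sigma) = \log 2$. For $(v)$, one selects a transitive subshift $\Sigma^\ast \subseteq \Sigma_2$ (e.g. the orbit closure of a generic sequence) and takes $\Lambda$ to be the closure of $\pi^{-1}(\Sigma^\ast)$ intersected with $X$; transitivity transfers through $\pi$, and sensitive dependence follows from the Banks--Brooks--Cairns--Davis--Stacey theorem applied to the factor, a topologically transitive system with dense periodic points.

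The main technical obstacle is the coherent path-selection over both forward and backward iterates: each application of the stretching hypothesis returns only a subpath, and one must organize these nested subpaths so that the resulting compact intersection is nonempty and produces a point whose bi-infinite $\Phi$-orbit lies in $\mathcal{H}$. The injectivity of $\Phi$ on $\mathcal{H}$ is essential here, both to run the backward construction (so that a connected piece of $\Phi^{-1}(\mathcal{K}_{s_{-1}})$ inside $\mathcal{K}_{s_0}$ exists) and to ensure that $\pi$ is genuinely defined on the invariant set $X \subseteq X_\infty$ built from \eqref{xin}. Once this combinatorial/topological backbone is in place, the dynamical conclusions $(i)$--$(v)$ follow by standard symbolic-dynamics arguments.
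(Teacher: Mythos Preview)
Your approach is essentially the same as the paper's. The paper's proof is extremely terse: it identifies as the key step exactly the self-stretching relation $({\mathcal H}_i\cap F^{-1}(\mathcal B),\Phi): {\widetilde{\mathcal A}}\stretchx {\widetilde{\mathcal A}}$ for $i=0,1$ (your $\mathcal{K}_i$), refers to \cite{PaZa-08} for its verification, and then defers all of $(i)$--$(v)$ to Lemmas 3.2, 3.3 and the surrounding discussion in \cite{PiZa-08}. You carry out the same reduction explicitly and then sketch the content of those cited lemmas (nested-subpath compactness argument, itinerary map, Brouwer-type fixed point for periodic itineraries, entropy via semi-conjugacy), so structurally the two proofs coincide; yours simply unpacks what the paper outsources to the literature.

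One minor point: your phrasing of the continuity of $\pi$ is slightly inverted---you want that nearby points in $X$ have itineraries agreeing on a long central block (using that $\mathcal{K}_0,\mathcal{K}_1$ are disjoint compacta and that each $\Phi^j$ is uniformly continuous on the compact set), not the converse. This is a wording issue rather than a mathematical gap.
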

\begin{proof}
The key step consists in showing that
\begin{equation}\label{2} 
({\mathcal H}_i\cap F^{-1}(\mathcal B),\Phi): {\widetilde{\mathcal A}}\stretchx {\widetilde{\mathcal A}}\,,\;\;\, i=0,1.
\end{equation}
See Theorem 3.1 in \cite{PaZa-08} for a verification of a more general version of this property. 
The desired conclusions then follow by Lemma 3.2, Lemma 3.3 and the discussion after Definition 1.1 in \cite{PiZa-08}.
\end{proof}$\hfill\square$

\newpage

\noindent
Recalling:
\begin{itemize}
\item the definition of a map inducing chaotic dynamics on two symbols in a subset $\mathcal D$ of its domain (cf. Definition 2.2 in \cite{Meea-09}), according to which every two-sided sequence on two symbols - say $0$ and $1$ - is realized through the iterates of the map, jumping between two disjoint compact subsets - say $\mathcal K_0$ and $\mathcal K_1$ - of $\mathcal D,$ and periodic sequences of symbols are reproduced by periodic orbits of the map; 
\item Theorem 2.3 in \cite{Meea-09}, which tells that any time the stretching relation \eqref{sapr} is fulfilled for a continuous function with respect to two disjoint compact sets, then that function induces chaotic dynamics on two symbols\footnote{Namely, as explained in \cite{Meea-09}, the SAP method allows to prove the presence of chaotic dynamics for a continuous map when verifying the validity of the SAP relation in \eqref{sapr} with respect to two suitable disjoint compact subsets of its domain.}, 
\end{itemize}
we can conclude that, when conditions $(C_F)$ and $(C_G)$ in Theorem \ref{th} are satisfied, the composite function $G\circ F$ induces chaotic dynamics on two symbols in $\mathcal A,$ since condition \eqref{2} is fulfilled. On the other hand, according to Theorem 2.2 in \cite{Meea-09}, if an injective map induces chaotic dynamics on two symbols in a set, then it possesses all the features listed in Theorem \ref{th}.\footnote{If the map $\Phi$ were not injective, rather than ${X}_{\infty}$ in \eqref{xin} we should introduce ${X}_{\infty}^{+}:=\bigcap_{n=0}^{\infty}\Phi^{-n}(\mathcal H)$ and then the properties listed in Theorem \ref{th} would hold true replacing $\Sigma_2:=\{0,1\}^{\mathbb Z}$ with $\Sigma_2^{+}:=\{0,1\}^{\mathbb N}.$ This means, in particular, that we could derive a weaker conclusion than $(ii)$ in Theorem \ref{th}, establishing a semi-conjugacy between the map $\Phi$ and the one-sided Bernoulli shift $\sigma^{+}((s_i)_i):=(s_{i+1})_i,\,\forall i\in\mathbb N\,,$ rather than with the two-sided Bernoulli shift. See Theorem 2.2 in \cite{Meea-09} for the precise statement and for a proof. More generally, we refer the interested reader to \cite{Meea-09,PiZa-08} for further details, as well as for a discussion on the various notions of chaos used in the literature and on the relationships among them.}\\
Hence, we can summarize the statement of Theorem \ref{th} by saying that, when conditions $(C_F)$ and $(C_G)$ therein are satisfied, the composite function $G\circ F$ induces chaotic dynamics on two symbols in $\mathcal A,$ knowing that from this fact it follows that
all the properties listed in Theorem \ref{th} are fulfilled for $G\circ F,$ also in regard to the existence of periodic points. We will use this reformulation of Theorem \ref{th} in Sections \ref{sec-3} and \ref{sec-4} when dealing with the composition of Poincar\'e maps associated with different systems (cf. e.g. the statement of Theorem \ref{app18}). We recall that a classical approach - see \cite{Kr-68} - to show the existence of periodic
solutions (harmonics or subharmonics) of systems of first order ODEs with periodic coefficients, under the 
assumption of uniqueness of the solutions for the Cauchy problems, is based on
the search of the fixed points or periodic points for the associated Poincar\'e map.\\ 
We close the present preliminary section by completing the explanation, started just before Theorem \ref{th}, of what the LTMs method consists in, referring the reader to Subsections \ref{31} and \ref{32} and to Section \ref{sec-4} for some concrete applications of it, as well as for a few graphical illustrations of possible geometrical configurations related to such framework (see in particular Figures \ref{nef}--\ref{biofk}, connected with the numerical examples). Given a Hamiltonian system with a nonisochronous center, led by the economic or biological interpretation of the considered context, we perturb it by modifying a parameter, for which it is sensible to assume that it  alternates in a periodic fashion between two different values. The LTMs method consists in proving the presence of chaotic dynamics for the Poincar\'e map obtained as composition of the Poincar\'e maps associated with the original system and the perturbed one,
by finding two linked annuli, each composed by orbits of one of the two systems, to which it is possible to apply Theorem \ref{th}. Notice that it would not be possible to find two linked annuli composed by orbits of the same Hamiltonian system. More precisely, the existence of chaos is shown by applying Theorem \ref{th}, i.e., by checking that conditions $(C_F)$ and $(C_G)$ therein are satisfied when taking as $F$ and $G$ the Poincar\'e maps associated with the two systems, and by choosing as $\mathcal A$ and $\mathcal B$ two among the generalized rectangles in which the considered linked annuli cross.\footnote{Namely, in the frameworks considered in Subsections \ref{31} and \ref{32} the linked annuli cross in two generalized rectangles (see Figures \ref{nef} and \ref{16-la}), while in Section \ref{sec-4} we find four generalized rectangles as intersections sets between two linked annuli (see Figures \ref{bioer} and \ref{bioek}). Indeed, the precise definition of linked annuli may vary according to the geometrical configuration of the considered framework (cf. Definitions \ref{li} and \ref{lim}).} We stress that the nonisochronicity of the center is crucial in the above described procedure, because it implies that the orbits composing the linked annuli are run with a different speed, so that the Poincar\'e maps produce a twist effect on the linked annuli, although closed orbits are invariant under the action of the Poincar\'e maps.
The consequent deformation produced on the generalized rectangles grows with the passing of time. Hence, if the switching times between the regimes governed by one of the two systems are large enough, those generalized rectangles are transformed by the Poincar\'e maps into spiral-like sets, intersecting several times the same generalized rectangles, allowing to check the stretching relations in $(C_F)$ and $(C_G),$
so that the presence of complex dynamics is guaranteed by Theorem \ref{th}.\\
This is the methodology that we are going to use in Section \ref{sec-3}, dealing with the evolutionary game theoretic frameworks describing the dynamic effects of positive defensive medicine, proposed in \cite{Anea-16}, and of negative defensive medicine, investigated in \cite{Anea-18}, as well as in Section \ref{sec-4}, where we consider a context introduced in \cite{Haea-07} and connected with intraspecific competition and environmental carrying capacity in predator-prey models.
We stress that all the frameworks that we will consider along the manuscript are variants of the same model. However, due to the different meaning attached to the parameters in the analyzed contexts, each time it will be sensible to periodically perturb a different parameter, and this in turn will generate a peculiar geometrical configuration, in which the LTMs method can be applied in a specific manner.

\section{Two medical malpractice litigation frameworks}\label{sec-3}
In the present section we introduce the settings considered in \cite{Anea-16,Anea-18} and we explain how to prove the existence for each of them of chaotic dynamics via the method of LTMs. Due to the large number of parameters involved in the models, we will just describe the strictly needed aspects, referring the interested reader to \cite{Anea-16,Anea-18} for further details.\\
Both works deal with defensive medicine, i.e., the deviation from good medical practice motivated by the
threat of liability claims, investigated e.g. in \cite{KeMC-96,Stea-05,TaBa-78}. In particular, Antoci et al. in \cite{Anea-16} propose an evolutionary game theoretic model to investigate the dynamic effects of positive defensive medicine, in which, according e.g. to \cite{Su-95}, additional services are offered to discourage patients from
filing malpractice claims, or to convince the legal system that the standard of care was
met. On the other hand, still in an evolutionary game theoretic model, the authors in \cite{Anea-18} consider
negative defensive medicine, that is the case in which physicians tend to avoid a source
of legal risk e.g. by adopting safer but less effective treatments (cf. \cite{Fe-12}). An important example of this phenomenon, analyzed in 
\cite{Duea-99, Duea-01,Loea-93}, is given by the excessive number of Cesarean sections, which can often be unnecessary. We stress that an extension of the frameworks considered in \cite{Anea-16,Anea-18} has been proposed in \cite{Anea-19}, where it is assumed that physicians have the possibility of insuring against liability claims, so that the system becomes four-dimensional, with the additional variables being 
 the share of the population of physicians who buy a malpractice insurance and the cost of the insurance policy.\footnote{Under the assumption
that the insurance company has perfect foresight about agents' behavior and it is able to instantaneously adjust the policy premium to its equilibrium value, the model in \cite{Anea-19} becomes three-dimensional, as discussed in Section 5 therein.}\\
In more detail, in \cite{Anea-16} physicians are randomly paired with patients
and provide them a risky medical treatment; if an adverse event occurs, patients can choose whether or not to suit their physician
for medical malpractice, while physicians can decide whether or not to practice 
defensive medicine. In the former case physicians perform unnecessary
diagnostic and therapeutic interventions, possibly harmful to patients, in order to prevent malpractice charges. 
In symbols, Antoci et al. in \cite{Anea-16} obtain the following system:
\begin{equation}\label{16}
\left\{
\begin{array}{ll}
d\,'=d(1-d)\left(p\,\ell(E_{ND}-E_D)-C_D+C_{ND}\right)\\
\vspace{-2mm}\\
\ell\,'=\ell(1-\ell)p\left(d(E_D-E_{ND})+E_{ND}-C_L\right)
\end{array}
\right. 
\end{equation}
where $d(t)\in[0,1]$ represents the share of physicians
playing strategy $D$ (practice defensive medicine) and $\ell(t)\in[0,1]$ is the share of patients
playing strategy $L$ (litigate in case that an adverse event occurs) at time t, and $d'(t),\,\ell'(t)$ are the corresponding time derivatives. In this manner $1-d(t)\in[0,1]$ represents the share of physicians
playing strategy $ND$ (not practice defensive medicine) and $1-\ell(t)\in[0,1]$ is the share of patients
playing strategy $NL$ (not litigate in case of an adverse event) at time t. The probability that an adverse event occurs is described by $p\in (0,1)$ and it is not affected by the choice of the physician to practice or not defensive medicine.
In the case of litigation, if the physician practiced defensive medicine, the patient wins with probability $q_D\in(0,1)$, while the physician wins with probability $1-q_D;$ if instead the physician did not practice defensive medicine, the patient wins with probability
$q_{ND}\in(0,1),$ while the physician wins with probability $1-q_{ND}.$ It is assumed that $q_D<q_{ND},$ i.e., defensive medicine decreases the probability for physicians of losing an eventual litigation. Calling $R>0$ the damage suffered by a patient in case that an adverse event occurs, coinciding with the compensation he/she receives from the physician if winning the litigation, and $K>0$ the sum that the patient, if losing the litigation, pays to the physician as reparation for the legal and reputation losses, it holds that
$E_D=q_D R -(1-q_D)K$ and $E_{ND}=q_{ND} R -(1-q_{ND})K$ are the expected settlement of the litigation when the physician practiced
defensive medicine or not, respectively, with $E_{ND}>E_D;$ practicing defensive medicine costs the physician $C_D,$ while not practicing defensive medicine costs the physician $C_{ND},$ with $C_D>C_{ND}\ge 0;$ finally, $C_L>0$ is the cost faced by the patient to sue the physician for medical malpractice.\\
As concerns \cite{Anea-18}, the system obtained therein is given by:
\begin{equation}\label{18}
\left\{
\begin{array}{ll}
d\,'=d(1-d)\left(B^{PH}-P\,E\,\ell\right)\\
\vspace{-2mm}\\
\ell\,'=\ell(1-\ell)\left(-q_{ND}(C_L-p_{ND}E)+\left((q_{ND}-q_{D})C_L+P\,E\right)d\right)
\end{array}
\right. 
\end{equation}
where $d(t),\,\ell(t)\in[0,1]$ and $d'(t),\,\ell'(t)$ have a similar meaning with respect to \eqref{16}. However, this time playing strategy $D$ (practice defensive medicine) means for the physician to opt for an inferior but safer treatment, while the physician plays strategy $ND$ when he/she chooses to provide the superior, but riskier, treatment to the patient. Treatments $D$ and $ND$ produce, in addition to sure benefits, an uncertain harm to the patient, which can occur with exogenous probabilities $q_D$ and $q_{ND},$ where $0<q_D<q_{ND}<1;$ the patient, when suffering the harm, can decide to sue the physician for medical malpractice, at a cost $C_L>0;$ parameters  
$p_{D},\,p_{ND}\in[0,1]$ describe the exogenous probabilities with which the court will order the physician to pay compensation for having provided the inferior, but safer, treatment or the
superior, but riskier, treatment to the patient, respectively, with $0\le p_{ND}\le p_D\le 1.$  
 Parameter $P:=p_Dq_D-p_{ND}q_{ND}$ represents the increase (or decrease, if negative) in the ex ante probability of being condemned for a physician providing the inferior treatment $D$ to a litigious patient; $E>0$ is the compensation that the patient obtains, if winning the lawsuit, from the losing physician; $B^{PH}$ represents the physician's additional immediate benefit (or cost, if negative) of practicing defensive medicine.

Both Systems \eqref{16} and \eqref{18} are encompassed in the following general formulation
\begin{equation}\label{zan}
\left\{
\begin{array}{ll}
u\,'=u(1-u)(\sigma v-\chi)\\
\vspace{-2mm}\\
v\,'=-v(1-v)(\xi u-\phi)
\end{array}
\right. 
\end{equation}
with $u(t),\,v(t)\in[0,1]$ and $\sigma,\,\chi,\,\xi,\,\phi\in\mathbb R.$
In particular, according to \cite{HoSi-88}, if $0<\chi<\sigma$ and $0<\phi<\xi$ all orbits are closed and periodic, surrounding the unique internal equilibrium $S=\left(\frac{\phi}{\xi},\frac{\chi}{\sigma}\right),$ which is a center. In view of finding a connection with the LTMs framework introduced in Section \ref{sec-2}, in what follows we will focus just on the latter 
scenario, which also describes a battle of the sexes game with two players and two strategies, in which no one of the two dominates the other. We stress that, due to the conditions imposed on the sign of the parameters in \cite{Anea-16,Anea-18}, it holds that \eqref{16} corresponds to \eqref{zan} when identifying $d$ with $u$ and $\ell$ with $v,$ while \eqref{18} corresponds to \eqref{zan} when identifying $d$ with $v$ and $\ell$ with $u.$ As we shall better see in Subsections \ref{31} and \ref{32}, this difference between \eqref{16} and \eqref{18} implies that orbits are run clockwise in the former framework and counterclockwise in the latter.\\
In order to use some results on the orbits obtained in \cite{Maea-16}, we need to check that System \eqref{zan} fulfills the conditions on page 778 therein. Namely, setting $f(u)=1-u,\,\psi(v)=\sigma v-\chi,\,g(v)=1-v,\,\varphi(u)=\xi u-\phi,$ it holds that 
$f,\,g:(0,1)\to (0,+\infty)$ are continuous functions and that $\varphi,\,\psi:(0,1)\to\mathbb R$ are $\mathcal C^1$ maps with positive derivative on $(0,1),$ satisfying
$$
\begin{array}{ll}
\lim_{u\to 0^{+}}\varphi(u)=-\phi<0,\,\,& \lim_{v\to 0^{+}}\psi(v)=-\chi<0,\\
\vspace{-2mm}\\
\lim_{u\to 1^{-}}\varphi(u)=\xi-\phi>0,\,\,& \lim_{v\to 1^{-}}\psi(v)=\sigma-\chi>0
\end{array}
$$
under the assumption that the system admits a center in $S.$ Moreover, setting $A(u)=\int\frac{\varphi(u)}{uf(u)}\,du=\int\frac{\xi u-\phi}{u(1-u)}\,du$ and 
$B(v)=\int\frac{\psi(v)}{vg(v)}\,dv=\int\frac{\sigma v-\chi}{v(1-v)}\,dv,$ it holds that
$$\lim_{u\to 0^{+}}A(u)=\lim_{u\to 1^{-}}A(u)=\lim_{v\to 0^{+}}B(v)=\lim_{v\to 1^{-}}B(v)=+\infty.$$
Indeed, $A(u)=-\phi\log(u)+(\phi-\xi)\log(1-u)+k_1$ and $B(v)=-\chi\log(v)+(\chi-\sigma)\log(1-v)+k_2,$ with $k_1,\,k_2\in\mathbb R.$ 
Hence, System \eqref{zan} admits $H(u,v)=A(u)+B(v)$ as first integral having $S$ as minimum point and, according to the results obtained in \cite{Maea-16}, 
all its solutions are periodic and describe closed orbits contained in the (open) unit square. Due to the wide class of Hamiltonian systems considered in \cite{Maea-16}, the authors do not provide a proof of the monotonicity of the period of the orbits, but show some results about the period of small and large cycles.\footnote{Namely, Madotto et al. in \cite{Maea-16} prove, on the one hand, that the approximation of the period length of the small cycles by means of the period of the linearized system is valid near the equilibrium point and, on the other hand, that the period length of large cycles, approaching the boundary of the feasible set, is arbitrarily high, in the case that $f$ and $g$ are for instance $\mathcal C^1$ functions on the open interval $(0,1),$ that are continuous in $0,$ too, like it happens in our framework.} Nonetheless, the proof of the increasing monotonicity of the period of the orbits with the energy level for System \eqref{zan} can be found in \cite{Sc-85,Sc-90} by Renate Schaaf (see $(4)$ and the corresponding comments on page 97 in \cite{Sc-85} and Example 2.4.3 on page 64 in \cite{Sc-90}). We will rely on such result in our application of the method of the LTMs, summarized in Section \ref{sec-2}, to System \eqref{zan}, and more precisely to \eqref{16} and \eqref{18}.\\
However, in order to apply the LTMs technique, even before checking the twist condition on the boundary of the considered linked annuli as a consequence of the nonisochronicity of the centers, we need to guarantee that linked together annuli do exist. To such aim, since annuli are for us composed of energy levels, we may assume that at least one of the model parameters varies in a periodic fashion, e.g. due to a seasonal effect, so that we obtain two conservative systems, the original and the perturbed ones, and for each of them we can consider an annulus: under suitable conditions on the orbits, the two annuli cross in two or more generalized rectangles, thus resulting linked together.
In particular, we will make the periodic variation assumption on parameter $p$ in System \eqref{16} and on $q_{ND}$ in System \eqref{18}.
Namely, we recall that $p\in (0,1)$ describes the probability that an adverse event for the patient occurs in the positive defensive medicine framework, and it is not affected by the choice of the physician to practice or not defensive medicine; assuming instead negative defensive medicine, $q_{ND}$ is the probability that a harm occurs when the physician chooses to provide the superior, but riskier, treatment to the patient. Two facts are well-known and largely documented in the medical literature: that hypertension phenomena raise in winter and fall in summer in countries both north and south of the equator (see \cite{Atea-08,Deea-12,Fa-13,Siea-10}) and that hypertension increases e.g. perioperative cardiac risks, as well as the incidence of perioperative major adverse cardiovascular and cerebrovascular events (cf. for instance \cite{Hoea-04,Liuea-16}). 
In the case of positive defensive medicine, we can suppose that the physician will opt for a surgical intervention any time it is needed, while in the case of negative defensive medicine we can assume that surgery coincides with the superior, but riskier, treatment ($ND$) in comparison for instance with a less effective, but safer, pharmacological therapy ($D$). Hence, due to the seasonality in hypertension and its connection with perioperative adverse events, we can assume that both $p$ in \eqref{16} and 
$q_{ND}$ in \eqref{18} alternate in a periodic fashion between a low and a high value.\\
Despite the similar meaning of the parameters $p$ and $q_{ND}$ and, consequently, despite the common rationale for assuming a periodic variation on them, we stress that modifying the former or the latter produces a different effect on the position of the center. Namely, as we shall see in Subsection \ref{31}, an increase in parameter $q_{ND}$ in \eqref{18} will generally affect the position of both components of the center, while in Subsection \ref{32} we will find that raising $p$ in \eqref{16} affects just the ordinate of the center. Moreover, as mentioned above, orbits are run clockwise for System \eqref{16} and counterclockwise for System \eqref{18}. Since in the proofs of our results about LTMs (cf. Theorems \ref{app18} and \ref{app16}) we need to introduce the rotation number and count the laps completed by suitable paths around the center, those computations turn out to be more intuitive when orbits are run counterclockwise. For such reason, we start focusing on the framework from \cite{Anea-18} in Subsection \ref{31}, turning to the analysis of the setting in \cite{Anea-16} in Subsection \ref{32}.

\subsection{Analysis of the framework in Antoci et al. (2018)}\label{31}

Recalling that $P:=p_Dq_D-p_{ND}q_{ND},$ when setting 
$\zeta=B^{PH},\,\eta=P\,E=(p_Dq_D-p_{ND}q_{ND})E,\,\vartheta=q_{ND}(C_L-p_{ND}E),\,\kappa=(q_{ND}-q_{D})\,C_L+P\,E=q_{ND}(C_L-p_{ND}E)+q_D(p_{D}E-C_L),$
System \eqref{18} becomes
\begin{equation}\label{18b}
\left\{
\begin{array}{ll}
d\,'=d(1-d)\left(\zeta-\eta\,\ell\right)\\
\vspace{-2mm}\\
\ell\,'=\ell(1-\ell)\left(-\vartheta+\kappa d\right)
\end{array}
\right. 
\end{equation}
with $d(t),\,\ell(t)\in[0,1].$ Focusing on the case $0<\zeta<\eta$ and $0<\vartheta<\kappa,$ the center is given by 
$S=\left(\frac{\vartheta}{\kappa},\frac{\zeta}{\eta}\right)$ and it is immediate to see that if $q_{ND}$ increases, reaching an higher value $\widehat{q}_{ND},$ then $\zeta$ does not vary, while $\eta$ falls and $\vartheta,\,\kappa$ raise. Calling $\widehat\eta,\,\widehat\vartheta,\,\widehat\kappa$ the new parameter values and $V=\left(\frac{\widehat\vartheta}{\widehat\kappa},\frac{\zeta}{\widehat\eta}\right)$ the new equilibrium, it is a global center of the perturbed system
\begin{equation}\label{18p}
\left\{
\begin{array}{ll}
d\,'=d(1-d)\left(\zeta-\widehat\eta\,\ell\right)\\
\vspace{-2mm}\\
\ell\,'=\ell(1-\ell)\left(-\widehat\vartheta+\widehat\kappa d\right)
\end{array}
\right. 
\end{equation}
as long as $0<\zeta<\widehat\eta$ and $0<\widehat\vartheta<\widehat\kappa.$
In particular, due to the increase in $q_{ND},$ the ordinate of $V$ will surely be larger than that of $S,$ and a simple computation shows that the abscissa of $V$ is always larger than that of $S,$ too. Namely, this is the case illustrated in
Figures \ref{la}--\ref{sapf}, which should help the reader to better understand Definition \ref{li} of linked annuli and to follow the proof of Theorem \ref{app18}.\\
Let us start by precisely defining what two linked annuli are in the present framework.
To such aim we notice that, by the results recalled after System \eqref{zan}, the orbits for System \eqref{18b} surrounding $S$ have equation
\begin{equation}\label{hs-18}
H_S(d,\ell)=-\vartheta\log(d)+(\vartheta-\kappa)\log(1-d)-\zeta\log(\ell)+(\zeta-\eta)\log(1-\ell)=e,
\end{equation} 
for some $e\ge e_0,$ where $e_0:=-\vartheta\log(\frac{\vartheta}{\kappa})+(\vartheta-\kappa)\log(1-\frac{\vartheta}{\kappa})-\zeta\log(\frac{\zeta}{\eta})+(\zeta-\eta)\log(1-\frac{\zeta}{\eta})$ is the minimum ``energy'' level attained by $H_S(d,\ell)$
on the unit square. Indeed, by the previously recalled results from \cite{Maea-16}, $H_S$ admits $S$ as minimum point.
Setting $\Gamma_S(e)=\{(d,\ell)\in (0,1)^2:H_S(d,\ell)=e\},$ for some $e>e_0,$ this is a simple closed curve surrounding $S.$ We call {\it annulus around} $S$ any set $\mathcal C_S(e_1,e_2)=\{(d,\ell)\in (0,1)^2:e_1\le H_S(d,\ell)\le e_2\}$ with $e_0<e_1<e_2,$ so that the inner boundary of 
$\mathcal C_S(e_1,e_2)$ coincides with $\Gamma_S(e_1)$ and the outer boundary coincides with $\Gamma_S(e_2).$\\
Similarly, the orbits for System \eqref{18p} surrounding $V$ have equation
\begin{equation}\label{hv-18}
H_V(d,\ell)=-\widehat\vartheta\log(d)+(\widehat\vartheta-\widehat\kappa)\log(1-d)-\zeta\log(\ell)+(\zeta-\widehat\eta)\log(1-\ell)=h,
\end{equation}
for some $h\ge h_0,$ where $h_0:=H_V(V)$ is the minimum ``energy'' level attained by $H_V(d,\ell)$
on the unit square. The sets $\Gamma_V(h)=\{(d,\ell)\in (0,1)^2:H_V(d,\ell)=h\},$ are simple closed curves surrounding $V$ for any $h>h_0,$
and we call {\it annulus around} $V$ any set $\mathcal C_V(h_1,h_2)=\{(d,\ell)\in (0,1)^2:h_1\le H_V(d,\ell)\le h_2\}$ with 
$h_0<h_1<h_2,$ whose inner boundary coincides with $\Gamma_V(h_1)$ and whose outer boundary coincides with $\Gamma_V(h_2).$\\
Let us consider the straight line $r$ joining $S$ and $V$ and define on it the ordering inherited from the horizontal axis, so that given $P=(d_P,\ell_P)$ and $Q=(d_Q,\ell_Q)$ belonging to $r$ it holds that $P\vartriangleleft\, Q$ (resp. $P\trianglelefteq\, Q$) if and only if $d_P<d_Q$ (resp. $d_P\le d_Q$). We are now in position to introduce the following:
\begin{definition}\label{li}
Given the annulus $\mathcal C_S(e_1,e_2)$ around $S$ and the annulus $\mathcal C_V(h_1,h_2)$ around $V,$ we say that they are linked together if
$$S_{2,-}\vartriangleleft\, S_{1,-}\trianglelefteq\, V_{2,-}\vartriangleleft\, V_{1,-}\trianglelefteq\, S_{1,+}\vartriangleleft\, S_{2,+}\trianglelefteq\, V_{1,+}\vartriangleleft\, V_{2,+}$$
where, for $i\in\{1,2\},$ $S_{i,-}$ and $S_{i,+}$ denote the intersection points\footnote{We stress that, for $e_i>e_0$ and $h_i>h_0,\,i\in\{1,2\},$ the boundary sets $\Gamma_S(e_i)$ and $\Gamma_V(h_i)$ intersect the straight line $r$ in exactly two points because 
$\{(d,\ell)\in (0,1)^2:H_S(d,\ell)\le e\}$ and $\{(d,\ell)\in (0,1)^2:H_V(d,\ell)\le h\},$ coinciding with the lower contour sets of the convex functions $H_S$ in \eqref{hs-18} and $H_V$ in \eqref{hv-18}, are star-shaped for all $e>e_0$ and for every $h>h_0,$ respectively. We will need the star-shapedness of those lower contour sets along the proof of Theorem \ref{app18}, too.} 
between $\Gamma_S(e_i)$ and the straight line $r,$ with 
$S_{i,-}\vartriangleleft\, S\vartriangleleft\, S_{i,+},$ and, similarly, $V_{i,-}$ and $V_{i,+}$ denote the intersection points between $\Gamma_V(h_i)$ and $r,$ with $V_{i,-}\vartriangleleft\, V\vartriangleleft\, V_{i,+}.$
\end{definition}
We refer the reader to Figure \ref{la} for a pictorial illustration of Definition \ref{li}. In order not to overburden the pictures, we drew the unit square, containing all orbits for both Systems \eqref{18b} and \eqref{18p}, just in Figure \ref{la} (A), but we will omit to represent it in the next pictures.

\begin{figure}[ht]
\centering
\begin{minipage}[t]{0.48\textwidth}
\includegraphics[width=\textwidth,height=6cm]{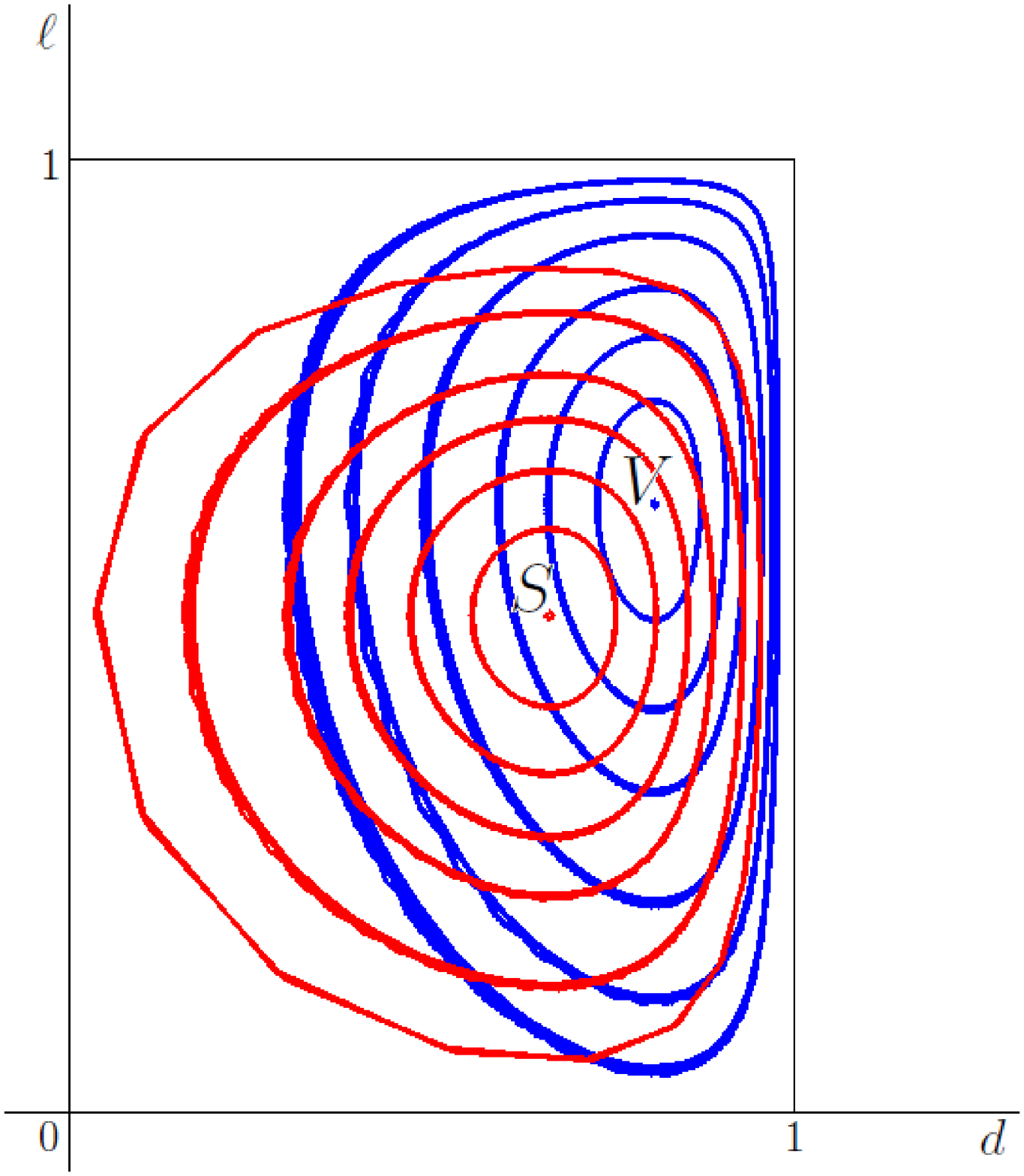}
\center{(A)}
\end{minipage}
\begin{minipage}[t]{0.48\textwidth}
\includegraphics[width=\textwidth,height=6cm]{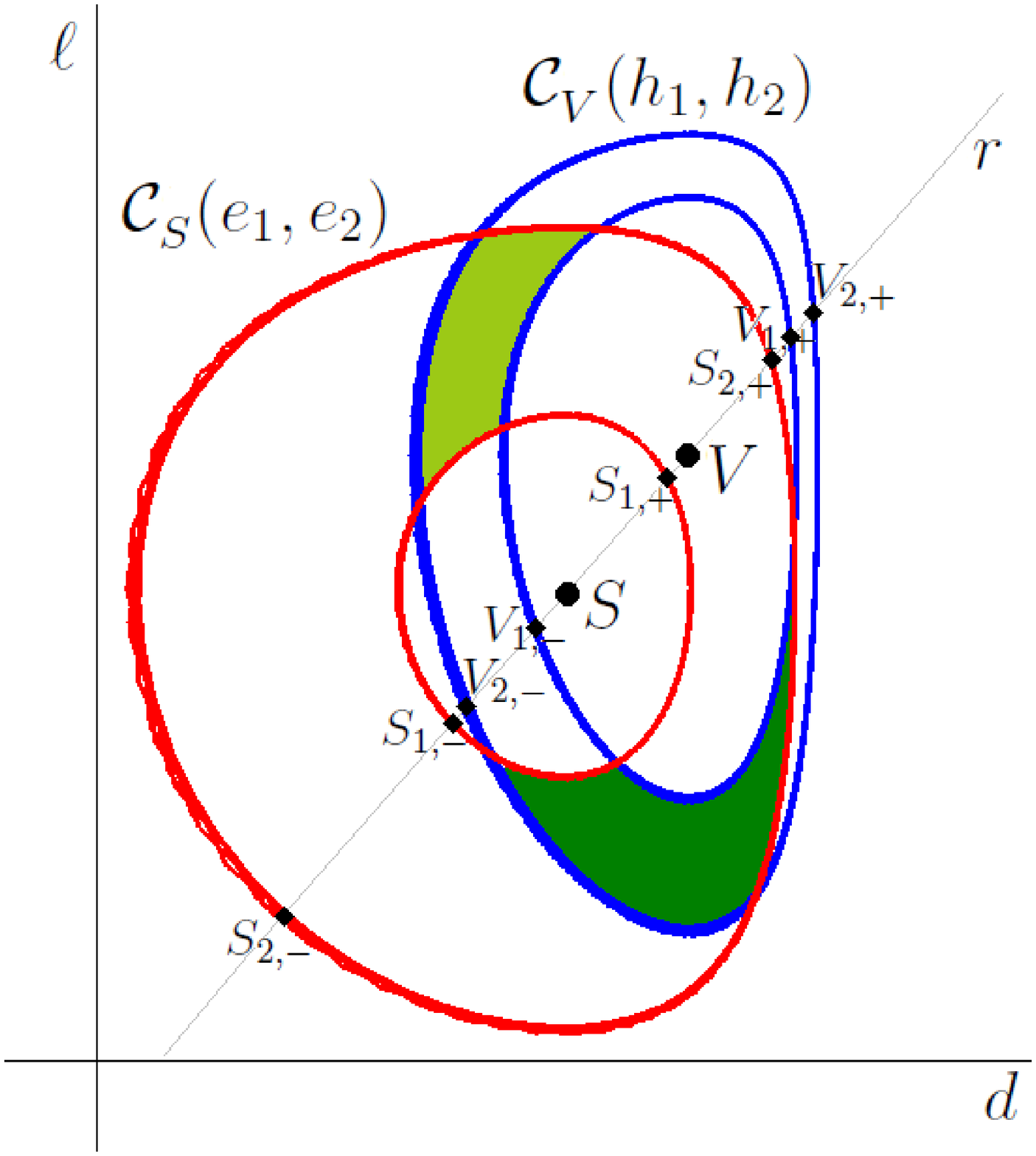}
\center{(B)}
\end{minipage}
\caption{In (A) we represent some energy level lines associated with System \eqref{18b}, in red, surrounding $S,$ as well as some energy level lines associated with System \eqref{18p}, in blue, surrounding $V.$ In (B), suitably choosing two level lines for each system, 
we obtain two linked together annuli, according to Definition \ref{li}.}\label{la}
\end{figure}

\noindent
In view of stating and proving Theorem \ref{app18}, we have to explain how the shift between the regimes described by Systems \eqref{18b} and \eqref{18p} occurs. Due to the above recalled seasonality in hypertension and its connection with perioperative adverse events, we can suppose that $q_{ND}$ in \eqref{18} alternate in a periodic fashion between a low and a high value.
We will then assume that our dynamical system is described by \eqref{18b} for $t\in[0,T_S)$, while it is described by \eqref{18p} for $t\in[T_S,T_S+T_V),$
and that the same alternation between the two regimes occurs with $T$-periodicity, where $T=T_S+T_V.$ In other terms, we can suppose that we are dealing with a system with periodic coefficients of the form
\begin{equation}\label{18t}
\left\{
\begin{array}{ll}
d\,'=d(1-d)\left(\zeta(t)-\eta(t)\ell\right)\\
\vspace{-2mm}\\
\ell\,'=\ell(1-\ell)\left(-\vartheta(t)+\kappa(t) d\right)
\end{array}
\right. 
\end{equation}
where $\zeta(t)\equiv\zeta$ and, for $x\in\{\eta,\vartheta,\kappa\},$ it holds that
\begin{equation}\label{xt}
x(t)=\left\{
\begin{array}{ll}
x \quad & \mbox{for } \, t\in[0,T_S) \\
\vspace{-2mm}\\
\widehat x \quad & \mbox {for } \, t\in[T_S,T)
\end{array}
\right.
\end{equation} 
with $0<\zeta<\widehat\eta,\,0<\vartheta<\kappa$ and $0<\widehat\vartheta<\widehat\kappa.$ 
Hence, the functions $\eta(t),\,\vartheta(t)$ and $\kappa(t)$ are 
piecewise constant and they are supposed to be extended to
the whole real line by $T$-periodicity.\\
Let us finally introduce our last needed ingredient, i.e., the Poincar\'e map $\Psi$ of System \eqref{18t}, which associates with any initial condition $(d_0,\ell_0)$ belonging to the open unit square the position at time $T$ of the solution 
$\varsigma(\,\cdot\,,(d_0,\ell_0))=(d(\,\cdot\,,(d_0,\ell_0)),\ell(\,\cdot\,,(d_0,\ell_0)))$ to \eqref{18t} starting at time $t=0$ from $(d_0,\ell_0).$
In symbols, $\Psi:(0,1)^2\to(0,1)^2,\,(d_0,\ell_0)\mapsto\varsigma(T,(d_0,\ell_0)).$
We remark that, along the paper, solutions are meant in the Carath\'eodory sense, i.e., they are absolutely continuous and satisfy the corresponding system for almost every $t\in\mathbb R.$ Notice that 
$\Psi$ may be decomposed as $\Psi=\Psi_V\,\circ\Psi_S,$ where $\Psi_S$ is the Poincar\'e map associated with System \eqref{18b} for $t\in[0,T_S]$ and $\Psi_V$ is the Poincar\'e map associated with System \eqref{18p} for $t\in[0,T_V].$ Moreover, since every annulus $\mathcal C_S(e_1,e_2)$ around $S$ is invariant under the action of the map $\Psi_S,$ being composed of the invariant orbits $\Gamma_S(e),$ for $e\in[e_1,e_2],$ and, similarly, since every annulus $\mathcal C_V(h_1,h_2)$ around $V$ is invariant under the action of the map $\Psi_V,$ it holds that every pair of linked twist annuli is invariant under the action of the composite map $\Psi.$
In Theorem \ref{app18} we will denote by $\tau_S(e),$ for all $e>e_0,$ the period of $\Gamma_S(e),$ i.e., the time needed by the solution 
$\varsigma_S(\,\cdot\,,(d_0,\ell_0))$ to System \eqref{18b}, starting from any $(d_0,\ell_0)\in\Gamma_S(e),$ to complete one turn around $S$ moving along $\Gamma_S(e),$ and by $\tau_V(h),$ for all $h>h_0,$ the period of $\Gamma_V(h),$ i.e., the time needed by the solution 
$\varsigma_V(\,\cdot\,,(d_0,\ell_0))$ to System \eqref{18p}, starting from any $(d_0,\ell_0)\in\Gamma_V(h),$ to complete one turn around $V$ moving along $\Gamma_V(h).$ A straightforward analysis of the phase portrait shows that both orbits surrounding $S$ or $V$ are run counterclockwise. As recalled above, the increasing monotonicity of $\tau_S(\,\cdot\,)$ and $\tau_V(\,\cdot\,)$ with the energy levels has been proven in \cite{Sc-85,Sc-90}. Hence, for any annulus $\mathcal C_S(e_1,e_2)$ around $S$ it holds that $\tau_S(e_1)<\tau_S(e_2),$ as well as for each annulus $\mathcal C_V(h_1,h_2)$ around $V$ it holds that $\tau_V(h_1)<\tau_V(h_2).$\\
Our result about System \eqref{18t} reads as follows:
\begin{theorem}\label{app18}
For any choice of the parameters $0<\zeta<\eta$ and $0<\vartheta<\kappa,$ defined like in System \eqref{18b}, and for any increase in the parameter $q_{ND}$ such that $0<\zeta<\widehat\eta$ and $0<\widehat\vartheta<\widehat\kappa,$ given the annulus $\mathcal C_S(e_1,e_2)$ around $S,$ for some $e_0<e_1<e_2,$ and the annulus $\mathcal C_V(h_1,h_2)$ around $V,$ for some $h_0<h_1<h_2,$ assume that they are linked together, calling $\mathcal A$ and $\mathcal B$ the connected components of $\mathcal C_S(e_1,e_2)\cap\mathcal C_V(h_1,h_2).$
Then, if $T_S>\frac{11\,\tau_S(e_1)\tau_S(e_2)}{2\,(\tau_S(e_2)-\tau_S(e_1))}$ and 
$T_V>\frac{9\,\tau_V(h_1)\tau_V(h_2)}{2\,(\tau_V(h_2)-\tau_V(h_1))},$ the 
Poincar\'e map $\Psi=\Psi_V\circ\Psi_S$ of System \eqref{18t} induces chaotic dynamics on two symbols in $\mathcal A$ and in $\mathcal B,$ and thus all the properties listed in Theorem \ref{th} are fulfilled for $\Psi$.
\end{theorem}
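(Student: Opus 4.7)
The approach is to verify the stretching conditions $(C_F)$ and $(C_G)$ of Theorem \ref{th} with $F:=\Psi_S$, $G:=\Psi_V$, so that the reformulation of Theorem \ref{th} recalled at the end of Section \ref{sec-2} produces the desired chaotic dynamics on two symbols in $\mathcal A$ and $\mathcal B$. Each connected component $\mathcal A,\,\mathcal B$ of $\mathcal C_S(e_1,e_2)\cap\mathcal C_V(h_1,h_2)$ is a topological disk bounded by four arcs, two lying on the $\Gamma_S(e_i)$ and two on the $\Gamma_V(h_i)$. I would orient the rectangles in a ``dual'' way: $\mathcal A^-_l,\,\mathcal A^-_r$ are the two sub-arcs of $\mathcal A$ lying on $\Gamma_S(e_1)$ and $\Gamma_S(e_2)$, while $\mathcal B^-_l,\,\mathcal B^-_r$ are the two sub-arcs of $\mathcal B$ lying on $\Gamma_V(h_1)$ and $\Gamma_V(h_2)$. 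With this choice, paths from $\mathcal A^-_l$ to $\mathcal A^-_r$ are transverse to the $\Gamma_S$-foliation preserved by $\Psi_S$, and paths from $\mathcal B^-_l$ to $\mathcal B^-_r$ are transverse to the $\Gamma_V$-foliation preserved by $\Psi_V$, matching the geometry of the two twist actions.

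On $\mathcal C_S(e_1,e_2)$ I would introduce an angular coordinate $\theta_S$ around $S$, which is well defined thanks to the star-shapedness of the sublevel sets of $H_S$ recorded in the footnote of Definition \ref{li}. Since $\Psi_S$ preserves each $\Gamma_S(e)$ and the orbits are run counterclockwise with period $\tau_S(e)$, in these coordinates $\Psi_S$ rotates $\Gamma_S(e)$ by exactly $2\pi T_S/\tau_S(e)$; the monotonicity from \cite{Sc-85,Sc-90} then yields the twist estimate
\begin{equation*}
\frac{2\pi T_S}{\tau_S(e_1)}-\frac{2\pi T_S}{\tau_S(e_2)}=2\pi T_S\,\frac{\tau_S(e_2)-\tau_S(e_1)}{\tau_S(e_1)\tau_S(e_2)}>11\pi,
\end{equation*}
by the assumed lower bound on $T_S$, and analogously a twist exceeding $9\pi$ for $\Psi_V$ in angular coordinates about $V$ on $\mathcal C_V(h_1,h_2)$. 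A path $\gamma$ joining $\mathcal A^-_l$ to $\mathcal A^-_r$ is thus a radial transversal for $\mathcal C_S$, and $\Psi_S(\gamma)$ is a spiral-like curve in $\mathcal C_S$ whose angular sweep about $S$ exceeds $11\pi$, with the $H_S$-value of each point preserved along the path.

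The heart of the proof is then a winding/counting argument. Since $\mathcal A$ and $\mathcal B$ are the two components of $\mathcal C_S\cap\mathcal C_V$ and lie on opposite sides of the line $r$ of Definition \ref{li}, they occupy disjoint angular sectors about $S$. Because $\Psi_S$ preserves $H_S$, the spiral $\Psi_S(\gamma)$ can enter or leave $\mathcal B$ only by crossing the two $\Gamma_V$-sub-arcs of $\partial\mathcal B$, and the star-shapedness of the sublevel sets of $H_V$ rules out tangential re-entries. Each full $2\pi$-lap of the spiral about $S$ contributes at most one sub-arc of $\Psi_S(\gamma)\cap\mathcal B$ that connects the $\Gamma_V(h_1)$-side to the $\Gamma_V(h_2)$-side of $\partial\mathcal B$, so that, once the angular widths of $\mathcal A,\,\mathcal B$ and the two angular gaps between them that the spiral must traverse are taken into account, a twist strictly greater than $11\pi$ yields at least two disjoint such sub-arcs. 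They provide the compact sets $\mathcal H_0,\mathcal H_1\subseteq\mathcal A\cap\Psi_S^{-1}(\mathcal B)$ required by $(C_F)$. The symmetric scheme applied to $\Psi_V$ on a transverse path in $\mathcal B$ demands only one sub-arc in $\mathcal A$ joining $\mathcal A^-_l$ to $\mathcal A^-_r$, and the lighter $9\pi$ threshold is sufficient, yielding $(C_G)$.

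Finally, $\Psi=\Psi_V\circ\Psi_S$ is continuous and injective on the open unit square as a composition of two Poincar\'e homeomorphisms, so Theorem \ref{th} applies and gives the claimed chaotic dynamics on two symbols in both $\mathcal A$ and $\mathcal B$. The main technical obstacle I foresee is precisely the counting step above: one must translate ``$2\pi$-laps of the angular coordinate'' into ``crossings of $\mathcal B$ with endpoints on the prescribed $\Gamma_V$-sides'', exploiting the counterclockwise orientation of the orbits of \eqref{18b} and \eqref{18p} together with the ordering of intersection points with $r$ dictated by Definition \ref{li}, in order to pin down the precise numerical thresholds $11/2$ and $9/2$ appearing in the statement.
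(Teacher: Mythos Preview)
Your overall architecture matches the paper's proof: same orientation of $\mathcal A,\mathcal B$, same reduction to checking $(C_F)$ for $\Psi_S$ and $(C_G)$ for $\Psi_V$, same use of an angular variable about each center together with the monotonicity of the period from \cite{Sc-85,Sc-90}. The difference, and the genuine gap, lies in your twist estimate.

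You assert that ``in these coordinates $\Psi_S$ rotates $\Gamma_S(e)$ by exactly $2\pi T_S/\tau_S(e)$''. For the \emph{geometric} angle about $S$ this is false: the angular speed along $\Gamma_S(e)$ is not constant, so the angular increment after time $T_S$ depends on the starting point unless $T_S$ is an integer multiple of $\tau_S(e)$. What star-shapedness does give (and what the paper uses in \eqref{rot}) is the coarser two-sided bound
\[
\lfloor T_S/\tau_S(e)\rfloor \;\le\; \rot_S(T_S,(d_0,\ell_0)) \;\le\; \lceil T_S/\tau_S(e)\rceil,
\]
so that for endpoints on $\Gamma_S(e_1)$ and $\Gamma_S(e_2)$ one obtains, as in \eqref{rots},
\[
\rot_S(T_S,\gamma(0))-\rot_S(T_S,\gamma(1)) \;>\; T_S\,\frac{\tau_S(e_2)-\tau_S(e_1)}{\tau_S(e_1)\tau_S(e_2)}\;-\;2.
\]
Thus of the $11/2$ turns purchased by the hypothesis on $T_S$, two full turns are already absorbed by this floor/ceiling correction, leaving only a guaranteed rotation-number gap $>7/2>3$, i.e.\ an angular gap $>6\pi$. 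A further $\pi$ is lost because the initial angles $\theta_S(0,\gamma(0)),\theta_S(0,\gamma(1))$ range over the half-plane containing $\mathcal A$, and only then does one get $\theta_S(T_S,\gamma(0))-\theta_S(T_S,\gamma(1))>5\pi$, which is what actually forces two disjoint sub-arcs of $\Psi_S(\gamma)$ to lie in the opposite half-plane (where $\mathcal B$ sits) and to run from $\Gamma_V(h_1)$ to $\Gamma_V(h_2)$. The same bookkeeping explains $9/2$ for $\Psi_V$: subtract $2$, get $>5/2>2$, hence $>4\pi$, lose $\pi$ for the initial spread of $\mathcal B$, end with $>3\pi$, enough for a single crossing of $\mathcal A$.

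In short, the thresholds $11/2$ and $9/2$ are not, as you suggest at the end, an artifact of the crossing-counting alone; a substantial part of them pays for the non-uniform angular speed. Replace the ``exact rotation'' claim by the floor/ceiling inequalities and redo the arithmetic accordingly, and your argument becomes the paper's.
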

We can summarize the statement of Theorem \ref{app18} by saying that any time we have two linked together annuli corresponding to Systems \eqref{18b} and \eqref{18p}, then, if the switching times between the regimes described by Systems \eqref{18b} and \eqref{18p} are sufficiently large, the Poincar\'e map $\Psi=\Psi_V\circ\Psi_S$ induces chaotic dynamics on two symbols in the sets in which the two annuli cross, and thus all properties listed in Theorem \ref{th} hold for $\Psi.$

\smallskip
\noindent {\textit{Proof of Theorem \ref{app18}.}}
Given the linked together annuli $\mathcal C_S(e_1,e_2)$ and $\mathcal C_V(h_1,h_2),$ let us set $\mathcal C_S(e_1,e_2)=\mathcal C_S^u(e_1,e_2)\cup\mathcal C_S^d(e_1,e_2)$ and $\mathcal C_V(h_1,h_2)=\mathcal C_V^u(h_1,h_2)\cup\mathcal C_V^d(h_1,h_2),$ where $u$ (resp. $d$) stands for ``up'' (resp. ``down''), and indeed $\mathcal C_S^u(e_1,e_2)$ (resp. $\mathcal C_S^d(e_1,e_2)$) is the subset of $\mathcal C_S(e_1,e_2)$ which lies above (resp. below) the straight line $r,$ joining $S$ and $V,$ and, analogously, $\mathcal C_V^u(h_1,h_2)$ (resp. $\mathcal C_V^d(h_1,h_2)$) is the subset of $\mathcal C_V(h_1,h_2)$ which lies above (resp. below) the straight line $r.$ See Figure \ref{ud} for a graphical illustration.

\begin{figure}[ht]
\centering
\includegraphics[width=7cm,height=6cm]{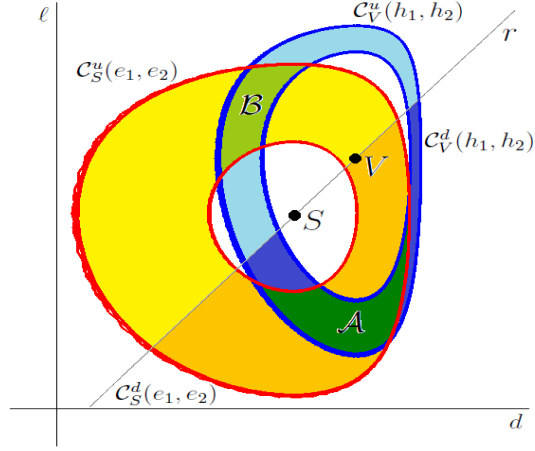}
\caption{Given the linked together annuli $\mathcal C_S(e_1,e_2)$ and $\mathcal C_V(h_1,h_2),$ we represent in yellow the set $\mathcal C_S^u(e_1,e_2)$ and in light blue the set $\mathcal C_V^u(h_1,h_2),$ both lying above the straight line $r,$ joining the centers $S$ and $V,$ while we represent in orange the set $\mathcal C_S^d(e_1,e_2)$ and in dark blue the set $\mathcal C_V^d(h_1,h_2),$ both lying below the straight line $r.$ Notice that $\mathcal C_S^d(e_1,e_2)$ and $\mathcal C_V^d(h_1,h_2)$ cross in $\mathcal A$ (colored in dark green), and that 
$\mathcal C_S^u(e_1,e_2)$ and $\mathcal C_V^u(h_1,h_2)$ cross in $\mathcal B$ (colored in light green).}\label{ud}
\end{figure}

\noindent
Let us also set $\mathcal A:=\mathcal C_S^d(e_1,e_2)\cap \mathcal C_V^d(h_1,h_2)$ and $\mathcal B:=\mathcal C_S^u(e_1,e_2)\cap\mathcal C_V^u(h_1,h_2).$ We are going to show that, when orienting them by setting ${\mathcal A}^-=\mathcal A^{-}_{l}\cup\mathcal A^{-}_{r}$ and ${\mathcal B}^-=\mathcal B^{-}_{l}\cup\mathcal B^{-}_{r},$ with $\mathcal A^{-}_{l}:=\mathcal A\cap\Gamma_S(e_1),\,\mathcal A^{-}_{r}:=\mathcal A\cap\Gamma_S(e_2),\,\mathcal B^{-}_{l}:=\mathcal B\cap\Gamma_V(h_1),\,\mathcal B^{-}_{r}:=\mathcal B\cap\Gamma_V(h_2),$ then conditions $(C_F)$ and $(C_G)$ in Theorem \ref{th} are fulfilled for the oriented rectangles ${\widetilde{\mathcal A}} := ({\mathcal A},{\mathcal A}^-)$ and
${\widetilde{\mathcal B}} := ({\mathcal B},{\mathcal B}^-)$ with $F=\Psi_S$ and $G=\Psi_V.$ Then the 
Poincar\'e map $\Psi=\Psi_V\circ\Psi_S$ of System \eqref{18t} induces chaotic dynamics on two symbols in $\mathcal A$ 
and thus, since $\Psi$ is a homeomorphism on the open unit square, 
all the properties listed in Theorem \ref{th} are fulfilled for $\Psi$. This will conclude the verification of the first half of our result. Namely, the proof will be complete when we will show that conditions $(C_F)$ and $(C_G)$ in Theorem \ref{th} are fulfilled with $F=\Psi_S$ and $G=\Psi_V$ also when interchanging the role of $\mathcal A$ and $\mathcal B,$ after having suitably modified their orientation, since from this it will follow that $\Psi$ induces chaotic dynamics on two symbols in $\mathcal B,$ too.\\
Checking that condition $(C_F)$ in Theorem \ref{th} is fulfilled with $F=\Psi_S$ with respect to ${\widetilde{\mathcal A}}$ and ${\widetilde{\mathcal B}}$ amounts to find two disjoint compact subsets ${\mathcal H_0},\,{\mathcal H_1}$ of ${\mathcal A}$
such that $\displaystyle{({\mathcal H}_i,\Psi_S): {\widetilde{\mathcal A}}\stretchx\, {\widetilde{\mathcal B}}},$ for $i=0,1.$ 
To such aim, we need to consider the image $\overline\gamma$ through $\Psi_S$ of any path $\gamma:[0,1]\to{\mathcal A}$ joining $\mathcal A^{-}_{l}$ with 
$\mathcal A^{-}_{r}$ and, recalling that orbits for System \eqref{18b} are run counterclockwise, count how many times it completely crosses 
${\mathcal B},$ from $\mathcal B^{-}_{l}$ to $\mathcal B^{-}_{r},$ when $T_S$ is large enough. In particular, we will show that, in order for ${\mathcal H_0},\,{\mathcal H_1}$ as above to exist, two complete crossings have to occur between $\overline\gamma$ and ${\mathcal B}.$ This will allow us to determine as lower bound on $T_S$ the constant $k_S:=\frac{11\,\tau_S(e_1)\tau_S(e_2)}{2\,(\tau_S(e_2)-\tau_S(e_1))},$ recalling that $\tau_S(e_1)<\tau_S(e_2).$\\
In view of counting the turns completed by the image of a path around $S,$ we introduce a system of generalized polar coordinates centered at $S.$ Namely, assuming to have performed the rototranslation of $\mathbb R^2$ that brings the origin
to the point $S$ and makes the horizontal axis coincide with the straight line $r,$ we can express the solution 
$\varsigma_S(t\,,(d_0,\ell_0))=(d(t\,,(d_0,\ell_0)),\ell(t\,,(d_0,\ell_0)))$ to System \eqref{18b} with initial point in
$(d_0,\ell_0)\in (0,1)^2$ through the radial coordinate
$\rho_S(t,(d_0,\ell_0))=\|\varsigma_S(t\,,(d_0,\ell_0))-S\|$ and the angular coordinate $\theta_S(t,(d_0,\ell_0))$ as 
$\varsigma_S(t\,,(d_0,\ell_0))=\rho_S(t,(d_0,\ell_0))\,(\cos(\theta_S(t,(d_0,\ell_0))),\,\sin(\theta_S(t,(d_0,\ell_0)))).$ Since orbits for System \eqref{18b} are run counterclockwise, we define 
the rotation number, describing the normalized angular displacement of $\varsigma_S(t\,,(d_0,\ell_0))$ during the time interval $[0,t]\subseteq [0,T_S],$ as 
\begin{equation}\label{rns}
\rot_S(t,(d_0,\ell_0)):=\frac{\theta_S(t,(d_0,\ell_0))-\theta_S(0,(d_0,\ell_0))}{2\pi}\,.
\end{equation}
For $e>e_0,$ as a consequence of the star-shapedness with respect to $S$ of the lower contour sets $\{(d,\ell)\in (0,1)^2:H_S(d,\ell)\le e\},$
 with $H_S$ as in \eqref{hs-18},
and recalling that $\tau_S(e)$ is the time needed by the solution $\varsigma_S(\,\cdot\,,(d_0,\ell_0))$ to System \eqref{18b}, starting from any $(d_0,\ell_0)\in\Gamma_S(e),$ to complete one turn around $S$ moving along $\Gamma_S(e),$
the following properties hold true for every $(d_0,\ell_0)\in\Gamma_S(e),$ for all $t\in [0,T_S]$ and for each $n\in {\mathbb N}\setminus\{0\}$:
\begin{equation}\label{rot}
\begin{array}{ll}
\rot_S(t,(d_0,\ell_0)) < n \, &\Longleftrightarrow  \,\,\, t < n \, \tau_S(e)\\
\vspace{-3mm}\\
\rot_S(t,(d_0,\ell_0)) = n \, &\Longleftrightarrow  \,\,\, t = n \, \tau_S(e)\\
\vspace{-3mm}\\
\rot_S(t,(d_0,\ell_0)) > n \, &\Longleftrightarrow  \,\,\, t > n \, \tau_S(e)
\end{array}
\end{equation}
so that $\rot_S(t,(d_0,\ell_0))\in(n,n+1) \Longleftrightarrow t\in(n \,\tau_S(e),(n + 1)\, \tau_S(e)).$\\
As explained above, in order to check that condition $(C_F)$ in Theorem \ref{th} is fulfilled with $F=\Psi_S$ with respect to ${\widetilde{\mathcal A}}$ and ${\widetilde{\mathcal B}},$ we have to find two disjoint compact subsets ${\mathcal H_0},\,{\mathcal H_1}$ of ${\mathcal A}$
such that $\displaystyle{({\mathcal H}_i,\Psi_S): {\widetilde{\mathcal A}}\stretchx\, {\widetilde{\mathcal B}}},$ for $i=0,1.$ Let then 
$\gamma:[0,1]\to{\mathcal A},\,\lambda\mapsto\gamma(\lambda),$ be any path such that $\gamma(0)\in\mathcal A^{-}_{l}$ and $\gamma(1)\in\mathcal A^{-}_{r}.$ For every $\lambda\in [0,1],$ we consider the position at time $T_S$ of the solution $\varsigma_S(t\,,\gamma(\lambda))$ to System \eqref{18b} starting at $t=0$ from $\gamma(\lambda)\in{\mathcal A},$ i.e., $\Psi_S(\gamma(\lambda)),$ as well as the angular coordinate 
$\theta_S(T_S,\gamma(\lambda))$ associated with it. Notice that the function $\lambda\mapsto\theta_S(T_S,\gamma(\lambda))$ is continuous due to the continuity of $\gamma$ and by the continuous dependence of the solutions from the initial data.
Since $\mathcal A^{-}_{l}\subset\Gamma_S(e_1)$ and $\mathcal A^{-}_{r}\subset\Gamma_S(e_2),$ recalling that $\tau_S(e_1)<\tau_S(e_2),$ we claim that if $T_S\ge\frac{11\,\tau_S(e_1)\tau_S(e_2)}{2\,(\tau_S(e_2)-\tau_S(e_1))}$ then\footnote{We stress that the proof of the properties regarding the system centered at $S$ is valid also when $T_S=\frac{11\,\tau_S(e_1)\tau_S(e_2)}{2\,(\tau_S(e_2)-\tau_S(e_1))}$ and, similarly, the verification of the features concerning the system centered at $V$ works even when $T_V=\frac{9\,\tau_V(h_1)\tau_V(h_2)}{2\,(\tau_V(h_2)-\tau_V(h_1))}.$ However, the strict inequalities make our result robust with respect to small perturbations in the coefficients of System \eqref{18t}, as better explained below.} 
$\theta_S(T_S,\gamma(0))-\theta_S(T_S,\gamma(1))>5\pi.$ If this is the case, there exists $n^{\ast}\in\mathbb N$ such that 
$[2n^{\ast}\pi,2n^{\ast}\pi+\pi]$ and $[2(n^{\ast}+1)\pi,2(n^{\ast}+1)\pi+\pi]$ are contained in the interval 
$\{\theta_S(T_S,\gamma(\lambda)):\lambda\in[0,1]\}.$ Hence, by Bolzano theorem, there exist two disjoint maximal intervals 
$[\lambda_0',\lambda_0''],\,[\lambda_1',\lambda_1'']$ of $[0,1]$ such that for $i\in\{0,1\}$ it holds that
$\{\theta_S(T_S,\gamma(\lambda)):\lambda\in[\lambda_i',\lambda_i'']\}\subseteq[2(n^{\ast}+i)\pi,2(n^{\ast}+i)\pi+\pi],$
with $\theta_S(T_S,\gamma(\lambda_i'))=2(n^{\ast}+i)\pi$ and $\theta_S(T_S,\gamma(\lambda_i''))=2(n^{\ast}+i)\pi+\pi.$
 We can then set 
${\mathcal H}_i:=\{(d_0,\ell_0)\in\mathcal A:\theta_S(T_S,(d_0,\ell_0))\in[2(n^{\ast}+i)\pi,2(n^{\ast}+i)\pi+\pi]\}$ for $i\in\{0,1\}$
in order to have the stretching relation 
\begin{equation}\label{sr}
\displaystyle{({\mathcal H}_i,\Psi_S): {\widetilde{\mathcal A}}\stretchx\, {\widetilde{\mathcal B}}} 
\end{equation}
satisfied. Namely, for $i\in\{0,1\},$ ${\mathcal H}_i$ is a compact set containing $\{\gamma(\lambda)):\lambda\in[\lambda_i',\lambda_i'']\}.$ Moreover, we have proved that, for $i\in\{0,1\}$ and $\lambda\in[\lambda_i',\lambda_i''],$ it holds that 
$\gamma(\lambda)\in{\mathcal H}_i,\,\Psi_S(\gamma(\lambda))\in\mathcal C_S^u(e_1,e_2)$ and $H_S(\Psi_S(\gamma(\lambda_i')))\le h_1,$
 $H_S(\Psi_S(\gamma(\lambda_i'')))\ge h_2.$ Then, there exists $[\lambda_i^{\ast},\lambda_i^{\ast\ast}]\subseteq[\lambda_i',\lambda_i'']$ such that $\Psi_S(\gamma(\lambda))\in\mathcal B$ for every $\lambda\in[\lambda_i^{\ast},\lambda_i^{\ast\ast}],$ and 
$H_S(\Psi_S(\gamma(\lambda_i^{\ast})))=h_1,\,H_S(\Psi_S(\gamma(\lambda_i^{\ast\ast})))=h_2.$ 
Since $\mathcal B^{-}_{l}:=\mathcal B\cap\Gamma_V(h_1),\,\mathcal B^{-}_{r}:=\mathcal B\cap\Gamma_V(h_2),$ this means that $\Psi_S(\gamma(\lambda_i^{\ast}))\in\mathcal B^{-}_{l}$ and $\Psi_S(\gamma(\lambda_i^{\ast\ast}))\in\mathcal B^{-}_{r},$ concluding the verification of \eqref{sr}. See Figure \ref{sapf} (A).\\
We then have to check our claim that $T_S\ge\frac{11\,\tau_S(e_1)\tau_S(e_2)}{2\,(\tau_S(e_2)-\tau_S(e_1))}$ implies that\\ 
$\theta_S(T_S,\gamma(0))-\theta_S(T_S,\gamma(1))>5\pi,$ where $\gamma:[0,1]\to{\mathcal A}$ is any path such that $\gamma(0)\in\mathcal A^{-}_{l}:=\mathcal A\cap\Gamma_S(e_1)$ and $\gamma(1)\in\mathcal A^{-}_{r}:=\mathcal A\cap\Gamma_S(e_2).$ Since $\rot_S(t,\gamma(0)) \geq \lfloor t/\tau_S(e_1)\rfloor$ and
$\rot_S(t,\gamma(1)) \leq \lceil t/\tau_S(e_2)\rceil$ for every $t> 0,$
it follows that
\begin{equation}\label{rots}
\rot_S(t,\gamma(0)) - \rot_S(t,\gamma(1))\ge \lfloor t/\tau_S(e_1)\rfloor-\lceil t/\tau_S(e_2)\rceil > t\, \frac{\tau_S(e_2) - \tau_S(e_1)}{\tau_S(e_1)\,\tau_S(e_2)}\, - 2
\end{equation}
for every $t>0.$
As a consequence, for $T_S\ge\frac{11\,\tau_S(e_1)\tau_S(e_2)}{2\,(\tau_S(e_2)-\tau_S(e_1))}$ it holds that
$$\rot_S(T_S,\gamma(0)) - \rot_S(T_S,\gamma(1))> T_S\, \frac{\tau_S(e_2) - \tau_S(e_1)}{\tau_S(e_1)\,\tau_S(e_2)}\, - 2\ge\frac{11}{2}-2=\frac{7}{2}>3.$$
Hence, $\theta_S(T_S,\gamma(0)) - \theta_S(T_S,\gamma(1))>6\pi+\theta_S(0,\gamma(0))-\theta_S(0,\gamma(1)).$ Since 
$\overline\gamma\subset\mathcal A:=\mathcal C_S^d(e_1,e_2)\cap\mathcal C_V^d(h_1,h_2),$ we have that 
$\theta_S(0,\gamma(0)),\,\theta_S(0,\gamma(1))\in [-\pi,0],$ and thus $\theta_S(0,\gamma(0))-\theta_S(0,\gamma(1))>-\pi,$ from which it follows that
$\theta_S(T_S,\gamma(0)) - \theta_S(T_S,\gamma(1))>5\pi,$ as desired.\\
Let us now turn to the proof that condition $(C_G)$ in Theorem \ref{th} is fulfilled with $G=\Psi_V$ with respect to ${\widetilde{\mathcal B}}$ and ${\widetilde{\mathcal A}},$ i.e., that $\displaystyle{\Psi_V: {\widetilde{\mathcal B}}\stretchx\, {\widetilde{\mathcal A}}}.$ 
To such aim, we need to consider the image through $\Psi_V$ of any path $\sigma:[0,1]\to{\mathcal B}$ joining $\mathcal B^{-}_{l}$ with 
$\mathcal B^{-}_{r}$ and check that it completely crosses ${\mathcal A},$ from $\mathcal A^{-}_{l}$ to $\mathcal A^{-}_{r},$ at least once
when $T_V\ge\frac{9\,\tau_V(h_1)\tau_V(h_2)}{2\,(\tau_V(h_2)-\tau_V(h_1))},$ recalling that orbits for System \eqref{18p} are run counterclockwise and that $\tau_V(h_1)<\tau_V(h_2).$
Also in this case we introduce a system of generalized polar coordinates, but centered at $V.$ Namely, assuming to have performed the rototranslation of $\mathbb R^2$ that brings the origin
to the point $V$ and makes the horizontal axis coincide with the straight line $r,$ we can express the solution 
$\varsigma_V(t\,,(d_0,\ell_0))=(d(t\,,(d_0,\ell_0)),\ell(t\,,(d_0,\ell_0)))$ to System \eqref{18p} with initial point in
$(d_0,\ell_0)\in (0,1)^2$ through the radial coordinate
$\rho_V(t,(d_0,\ell_0))=\|\varsigma_V(t\,,(d_0,\ell_0))-V\|$ and the angular coordinate $\theta_V(t,(d_0,\ell_0))$ as 
$\varsigma_V(t\,,(d_0,\ell_0))=\rho_V(t,(d_0,\ell_0))\,(\cos(\theta_V(t,(d_0,\ell_0))),\,\sin(\theta_V(t,(d_0,\ell_0)))).$ Since orbits for System \eqref{18p} are still run counterclockwise, we can define  
the rotation number as 
\begin{equation}\label{rnv}
\rot_V(t,(d_0,\ell_0)):=\frac{\theta_V(t,(d_0,\ell_0))-\theta_V(0,(d_0,\ell_0))}{2\pi}\,,
\end{equation}
describing the normalized angular displacement during the time interval $[0,t]\subseteq [0,T_V]$ of $\varsigma_V(t\,,(d_0,\ell_0)).$\\
Let then $\sigma:[0,1]\to{\mathcal B}$ be any path such that $\sigma(0)\in\mathcal B^{-}_{l}$ and
$\sigma(1)\in\mathcal B^{-}_{r}.$ We are going to show that if $T_V\ge\frac{9\,\tau_V(h_1)\tau_V(h_2)}{2\,(\tau_V(h_2)-\tau_V(h_1))}$
then $\theta_V(T_V,\sigma(0))-\theta_V(T_V,\sigma(1))>3\pi.$
Since $\mathcal B^{-}_{l}:=\mathcal B\cap\Gamma_V(h_1),\,\mathcal B^{-}_{r}:=\mathcal B\cap\Gamma_V(h_2),$ it holds that
$\rot_V(t,\sigma(0)) \geq \lfloor t/\tau_V(h_1)\rfloor$ and
$\rot_V(t,\sigma(1)) \leq \lceil t/\tau_V(h_2)\rceil$ for every $t> 0,$
and thus
$\rot_V(t,\sigma(0)) - \rot_V(t,\sigma(1))>t\, \frac{\tau_V(h_2) - \tau_V(h_1)}{\tau_V(h_1)\,\tau_V(h_2)}\, - 2,$
similar to \eqref{rots}. Hence, for\\ 
$T_V\ge\frac{9\,\tau_V(h_1)\tau_V(h_2)}{2\,(\tau_V(h_2)-\tau_V(h_1))}$ it holds that
\begin{equation*}
\rot_V(T_V,\sigma(0)) - \rot_V(T_V,\sigma(1))> T_V\, \frac{\tau_V(h_2) - \tau_V(h_1)}{\tau_V(h_1)\,\tau_V(h_2)}\, - 2\ge\frac{9}{2}-2=\frac{5}{2}>2.
\end{equation*}
Recalling that $\mathcal B:=\mathcal C_S^u(e_1,e_2)\cap\mathcal C_V^u(h_1,h_2),$ since 
$\overline\sigma\subset\mathcal B,$ we have that \\
$\theta_V(T_V,\sigma(0)) - \theta_V(T_V,\sigma(1))>4\pi+\theta_V(0,\sigma(0))-\theta_V(0,\sigma(1))>4\pi-\pi=3\pi,$ as needed,
because $\theta_V(0,\sigma(0)),\,\theta_V(0,\sigma(1))\in [0,\pi],$ and thus $\theta_V(0,\sigma(0))-\theta_V(0,\sigma(1))>-\pi.$\\
Then, there exists $n^{\ast\ast}\in\mathbb N$ such that 
$[(2n^{\ast\ast}+1)\pi,(2n^{\ast\ast}+2)\pi]\subset\{\theta_V(T_V,\sigma(\lambda)):\lambda\in[0,1]\}$ and, consequently, there exists 
$[\lambda',\lambda'']\subseteq [0,1]$ such that \\
$\{\theta_V(T_V,\sigma(\lambda)):\lambda\in[\lambda',\lambda'']\}\subseteq[(2n^{\ast\ast}+1)\pi,(2n^{\ast\ast}+2)\pi],$
with $\theta_V(T_V,\sigma(\lambda'))=(2n^{\ast\ast}+1)\pi$ and $\theta_V(T_V,\sigma(\lambda''))=(2n^{\ast\ast}+2)\pi.$
Thus, $\Psi_V(\sigma(\lambda))\in\mathcal C_V^d(h_1,h_2)$ for every $\lambda\in[\lambda',\lambda'']$
and $H_V(\Psi_V(\sigma(\lambda')))\le e_1,\,
H_V(\Psi_V(\sigma(\lambda'')))\ge e_2.$ Then, there exists $[\lambda^{\ast},\lambda^{\ast\ast}]\subseteq[\lambda',\lambda'']$ such that $\Psi_V(\sigma(\lambda))\in\mathcal A$ for every $\lambda\in[\lambda^{\ast},\lambda^{\ast\ast}],$ and it holds that
$H_V(\Psi_V(\sigma(\lambda^{\ast})))=e_1,\,H_V(\Psi_V(\sigma(\lambda^{\ast\ast})))=e_2,$ so that  
the stretching relation 
$\displaystyle{\Psi_V: {\widetilde{\mathcal B}}\stretchx\, {\widetilde{\mathcal A}}}$ 
is verified. See Figure \ref{sapf} (B).\\
Since we have checked that conditions $(C_F)$ and $(C_G)$ in Theorem \ref{th} are fulfilled for the oriented rectangles ${\widetilde{\mathcal A}} := ({\mathcal A},{\mathcal A}^-)$ and
${\widetilde{\mathcal B}} := ({\mathcal B},{\mathcal B}^-)$ with $F=\Psi_S$ and $G=\Psi_V,$ it follows that $\Psi=\Psi_V\circ\Psi_S$
induces chaotic dynamics on two symbols in $\mathcal A.$ Given that $\Psi$ is a homeomorphism on the open unit square, it is injective on the set ${\mathcal H}:=({\mathcal H}_0\cup{\mathcal H}_1)\cap{\Psi_S}^{-1}(\mathcal B),$ and thus
all the properties listed in Theorem \ref{th} are fulfilled for $\Psi$.\\
In order to verify that $\Psi$ induces chaotic dynamics on two symbols in $\mathcal B,$ too, we orientate $\mathcal A$ by setting $\mathcal A^{-}_{l}:=\mathcal A\cap\Gamma_V(h_1),\,\mathcal A^{-}_{r}:=\mathcal A\cap\Gamma_V(h_2),\,\mathcal B^{-}_{l}:=\mathcal B\cap\Gamma_S(e_1),\,\mathcal B^{-}_{r}:=\mathcal B\cap\Gamma_S(e_2),$ and we should
show that the image through $\Psi_S$ of any path joining in $\mathcal B$ the sides $\mathcal B^{-}_{l}$ and 
$\mathcal B^{-}_{r}$ crosses ${\mathcal A},$ from $\mathcal A^{-}_{l}$ to $\mathcal A^{-}_{r},$ at least twice when 
$T_S\ge\frac{11\,\tau_S(e_1)\tau_S(e_2)}{2\,(\tau_S(e_2)-\tau_S(e_1))},$ and then check that the image through $\Psi_V$ of any path in $\mathcal A$ joining $\mathcal A^{-}_{l}$ with 
$\mathcal A^{-}_{r}$ crosses ${\mathcal B},$ from $\mathcal B^{-}_{l}$ to $\mathcal B^{-}_{r},$ at least once
when $T_V\ge\frac{9\,\tau_V(h_1)\tau_V(h_2)}{2\,(\tau_V(h_2)-\tau_V(h_1))}.$ Namely, this amounts to show that $\displaystyle{({\mathcal K}_i,\Psi_S): {\widetilde{\mathcal B}}\stretchx\, {\widetilde{\mathcal A}}}$ for suitable compact disjoint subsets ${\mathcal K}_i$ of $\mathcal B,$ for $i\in\{0,1\},$ as well as that $\displaystyle{\Psi_V: {\widetilde{\mathcal A}}\stretchx\, {\widetilde{\mathcal B}}}.$ 
Due to the similarity with the above proved properties, we leave the details in the verification of new conditions $(C_F)$ and $(C_G)$ in Theorem \ref{th} to the reader.
Then, by Theorem \ref{th} it holds that $\Psi$ induces chaotic dynamics on two symbols in $\mathcal B,$ as desired.\\
The proof is complete. 
$\hfill\square$ 

\begin{figure}[ht]
\centering
\begin{minipage}[t]{0.48\textwidth}
\includegraphics[width=\textwidth,height=6cm]{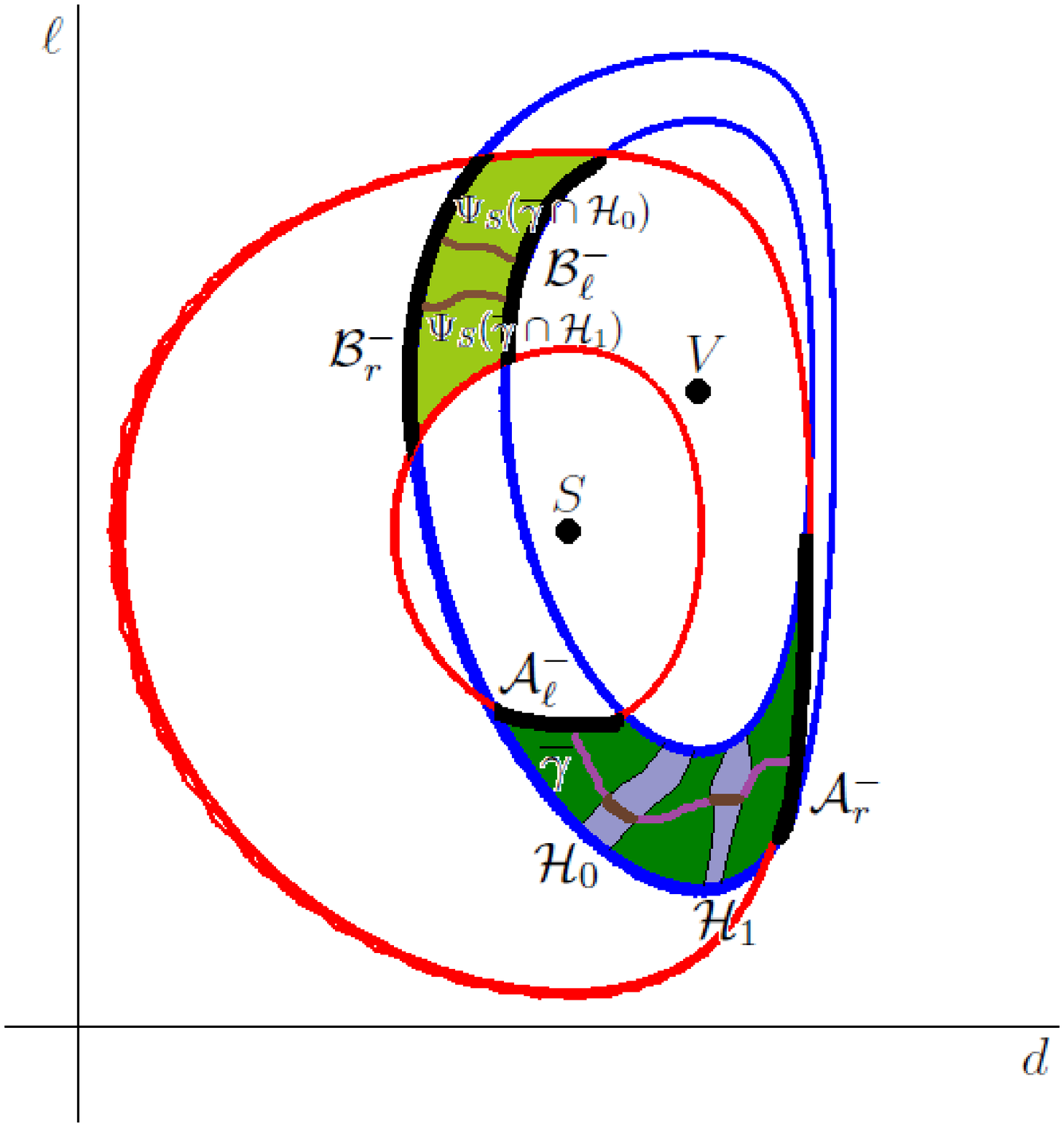}
\center{(A)}
\end{minipage}
\begin{minipage}[t]{0.48\textwidth}
\includegraphics[width=\textwidth,height=6cm]{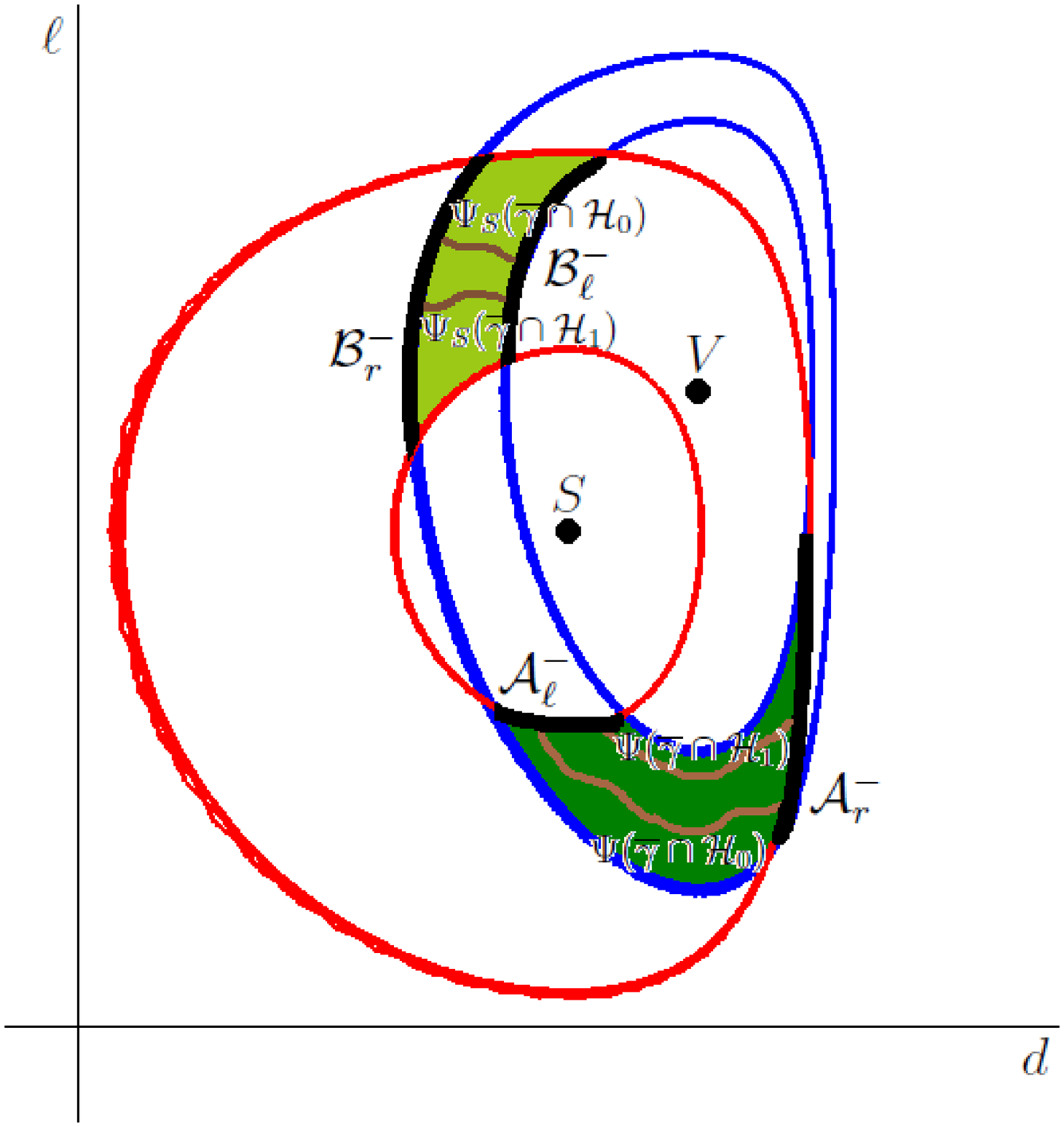}
\center{(B)}
\end{minipage}
\caption{In (A) we provide a qualitative representation of what happens when the stretching relation in \eqref{sr} is fulfilled. Namely, for any path $\gamma$ (in purple) joining $\mathcal A^{-}_{l}$ to $\mathcal A^{-}_{r}$ in $\mathcal A,$ there exist two compact subsets $\mathcal H_0$ and $\mathcal H_1$ (in lilac) of $\mathcal A$ such that the restriction (represented in brown) of $\gamma$ to each of them is transformed by $\Psi_S$ into a path (represented in brown) joining $\mathcal B^{-}_{l}$ to $\mathcal B^{-}_{r}$ in $\mathcal B.$ In (B)
we provide a qualitative representation of the effect produced on each of those two paths by the 
stretching relation $\displaystyle{\Psi_V: {\widetilde{\mathcal B}}\stretchx\, {\widetilde{\mathcal A}}}.$ 
In this case the Poincar\'e map $\Psi_V$ transforms those paths into paths (hazel colored) joining $\mathcal A^{-}_{l}$ to $\mathcal A^{-}_{r}$ in $\mathcal A.$}\label{sapf}
\end{figure}

As it is clear from the proof of Theorem \ref{app18}, the chaotic dynamics on two symbols are the result of the twist property
for the rotation number, generated by the different speeds with which the inner and the outer boundaries of two linked together annuli are run. Namely, for large enough time-intervals this suffices to make the image of paths through the Poincar\'e maps related to Systems 
\eqref{18b} and \eqref{18p} turn in a spiral-like fashion inside the annuli and cross a suitably high number of times the intersection sets between the linked annuli, where the invariant chaotic sets are located. See Figure \ref{sapf} for a qualitative graphical representation
of our framework, as well as Figures 4 and 5 in \cite{PaZa-08} for a similar scenario, in which the spiral effect is clearly illustrated. Indeed, if the switching times $T_S$ and $T_V$ were still larger, it could be proven that the Poincar\'e map $\Psi=\Psi_V\circ\Psi_S$ associated with System \eqref{18t} induces chaotic dynamics on $m$ symbols, where $m$ is any number greater or equal to $2,$ implying the existence of a semi-conjugacy between $\Psi$ and $\Sigma_m:=\{0,1,\dots,m-1\}^{\mathbb Z}$ and, consequently, implying the estimate $h_{\rm top}(\Psi)\ge\log(m)$ for the topological entropy. See Lemma 3.1 and Theorem 1.2 in \cite{PiZa-08} for further details.\\
We also stress that Theorem \ref{app18} is robust with respect to small perturbations in the coefficients of System \eqref{18t}, in $L^1$ norm. Namely, if $T_S$ and $T_V$ satisfy the conditions described in its statement, then, recalling the definition of System \eqref{18t} and of its coefficients in \eqref{xt}, there exists a positive constant $\varepsilon$ such that the same conclusions of Theorem \ref{app18} hold true for the system 
\begin{equation*}
\left\{
\begin{array}{ll}
d\,'=d(1-d)\left(\tilde\zeta(t)-\tilde\eta(t)\ell\right)\\
\vspace{-2mm}\\
\ell\,'=\ell(1-\ell)\left(-\tilde\vartheta(t)+\tilde\kappa(t) d\right)
\end{array}
\right. 
\end{equation*}
with $\tilde\zeta,\,\tilde\eta,\,\tilde\vartheta,\,\tilde\kappa: \mathbb R\to\mathbb R$ that are $T$-periodic functions,
as long as
$$
\int_0^T \bigl|\,\tilde\zeta(t)-\zeta\bigr|\,dt < \varepsilon,\quad
\int_0^T \bigl|\,\tilde\eta(t)-\eta(t)\bigr|\,dt < \varepsilon,
$$
$$
\int_0^T \bigl|\,\tilde\vartheta(t)-\vartheta(t)\bigr|\,dt < \varepsilon,\quad
\int_0^T \bigl|\,\tilde\kappa(t)-\kappa(t)\bigr|\,dt < \varepsilon.
$$
All the above remarks apply, with the suitable modifications, to the frameworks discussed in Subsection \ref{32} and in Section \ref{sec-4}, too.\\
We conclude the present investigation concerning the model proposed in \cite{Anea-18} by illustrating a
numerical example, in which Theorem \ref{app18} can be used to show the existence of chaotic dynamics.
\begin{example}\label{18ex}
Taking $p_D=0.9,\,p_{ND}=0.1,\,q_D=0.1,\,q_{ND}=0.2<\widehat{q}_{ND}=0.3,\,B^{PH}=6,\,E=140,\,C_L=90,$ we obtain 
System \eqref{18b} with $\zeta=B^{PH}=6,\,\eta=(p_Dq_D-p_{ND}q_{ND})E=9.8,\,\vartheta=q_{ND}(C_L-p_{ND}E)=15.2,\,\kappa=q_{ND}(C_L-p_{ND}E)+q_D(p_{D}E-C_L)=18.8,$ as well as System \eqref{18p} with $\zeta=B^{PH}=6,\,\widehat\eta=(p_Dq_D-p_{ND}\widehat{q}_{ND})E=8.4,\,\widehat\vartheta=\widehat{q}_{ND}(C_L-p_{ND}E)=22.8,\,\widehat\kappa=\widehat{q}_{ND}(C_L-p_{ND}E)+q_D(p_{D}E-C_L)=26.4.$
Hence, the former system has a center in $S=\left(\frac{\vartheta}{\kappa},\frac{\zeta}{\eta}\right)=(0.809,0.612),$ while the latter system has a center in $V=\left(\frac{\widehat\vartheta}{\widehat\kappa},\frac{\zeta}{\widehat\eta}\right)=(0.864,0.714).$\\
As shown in Figure \ref{nef}, two linked together annuli $\mathcal C_S(e_1,e_2)$ and $\mathcal C_V(h_1,h_2)$ can be obtained\,\footnote{We remark that, with respect to Figures \ref{la}--\ref{sapf}, Figure \ref{nef} is drawn for a different (more sensible, from an interpretative viewpoint) parameter configuration. However, in Figure \ref{nef} orbits are more squeezed, especially for System \eqref{18p}, so that the sets specified in the previous pictures would have been more difficult to highlight in the present context. Namely, Figures \ref{la}--\ref{sapf} just illustrate Definition \ref{li} and Theorem \ref{app18}, but we do not assign to them any interpretative value.} for $e_1=16.9,\,e_2=18.5,\,h_1=16.9,\,h_2=18.4,$ intersecting in the two disjoint generalized rectangles denoted by $\mathcal A$ and $\mathcal B.$ In this case, software-assisted computations show that 
$\tau_S(e_1)\approx 3.2 <\tau_S(e_2)\approx 3.8$ and $\tau_V(h_1)\approx 3<\tau_V(h_2)\approx 3.3.$ Hence, Theorem \ref{app18} guarantees the existence of chaotic dynamics for the Poincar\'e map $\Psi=\Psi_V\circ\Psi_S$ associated with System \eqref{18t} provided that 
$T_S>\frac{11\,\tau_S(e_1)\tau_S(e_2)}{2\,(\tau_S(e_2)-\tau_S(e_1))}\approx 111.467$ and 
$T_V>\frac{9\,\tau_V(h_1)\tau_V(h_2)}{2\,(\tau_V(h_2)-\tau_V(h_1))}\approx 148.500.$ Thus, considering $T=T_S+T_V=365,$ i.e., a period of one year, and assuming for instance that $T_S=T_V,$ i.e., that across the year the hot and the cold days are equally distributed, in which, according e.g. to \cite{Atea-08,Deea-12}, hypertension phenomena fall and raise, respectively, then it is possible to apply Theorem \ref{app18} to conclude that $\Psi$ induces chaotic dynamics on two symbols in $\mathcal A$ and $\mathcal B.$
\end{example}

\begin{figure}[ht]
\centering
\includegraphics[width=5cm,height=6cm]{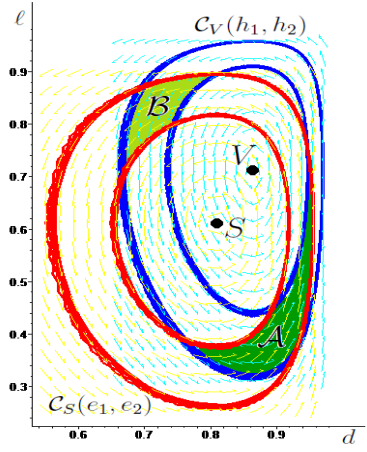}
\caption{The two linked together annuli $\mathcal C_S(e_1,e_2)$ (whose boundary is colored in red) and $\mathcal C_V(h_1,h_2)$ (whose boundary is colored in blue) considered in Example \ref{18ex}, together with the corresponding phase portrait.}\label{nef}
\end{figure}

\subsection{Analysis of the framework in Antoci et al. (2016)}\label{32}
Recalling that
$E_D=q_D R-(1-q_D)K$ and $E_{ND}=q_{ND}R-(1-q_{ND})K,$
when setting 
$\lambda=E_{ND}-E_D,\,\mu=C_D-C_{ND},\,\nu=E_{ND}-C_L,$
System \eqref{16} becomes
\begin{equation}\label{16b}
\left\{
\begin{array}{ll}
d\,'=d(1-d)\left(p\lambda\ell-\mu\right)\\
\vspace{-2mm}\\
\ell\,'=\ell(1-\ell)p\left(\nu-\lambda d\right)
\end{array}
\right. 
\end{equation}
with $d(t),\,\ell(t)\in[0,1].$ Focusing on the case $0<\mu<p\lambda$ and $0<\nu<\lambda,$ the center is given by 
$S=\left(\frac{\nu}{\lambda},\frac{\mu}{p\lambda}\right)$ and thus if $p$ increases, reaching an higher value $\widehat p,$ calling $V=\left(\frac{\nu}{\lambda},\frac{\mu}{\widehat p\lambda}\right)$ the new equilibrium, it is a global center of the perturbed system
\begin{equation}\label{16p}
\left\{
\begin{array}{ll}
d\,'=d(1-d)\left(\widehat p\lambda\ell-\mu\right)\\
\vspace{-2mm}\\
\ell\,'=\ell(1-\ell)\widehat p\left(\nu-\lambda d\right)
\end{array}
\right. 
\end{equation}
as long as $0<\mu<\widehat p\lambda$ and $0<\nu<\lambda.$
In particular, due to the raise in $p,$ the abscissa of $V$ coincides with that of $S,$ while the ordinate of $V$ is smaller than that of $S.$\\
In what follows, we describe just the main steps that allow to state and prove a result analogous to Theorem \ref{app18}, focusing on the differences with Subsection \ref{31}. In order to simplify the comparison between Subsections \ref{31} and \ref{32}, we will maintain the notation introduced therein to denote similar objects.\\
The orbits for System \eqref{16b} surrounding $S$ have equation
\begin{equation}\label{hs-16}
H_S(d,\ell)=-p\nu\log(d)+p(\nu-\lambda)\log(1-d)-\mu\log(\ell)+(\mu-p\lambda)\log(1-\ell)=e,
\end{equation} 
for some $e\ge e_0,$ where $e_0:=-p\nu\log(\frac{\nu}{\lambda})+p(\nu-\lambda)\log(1-\frac{\nu}{\lambda})-\mu\log(\frac{\mu}{p\lambda})+(\mu-p\lambda)\log(1-\frac{\mu}{p\lambda})$ is the minimum energy level attained by $H_S(d,\ell)$ on the unit square.
The orbits for System \eqref{16p} surrounding $V$ have equation
\begin{equation}\label{hv-16}
H_V(d,\ell)=-\widehat p\nu\log(d)+\widehat p(\nu-\lambda)\log(1-d)-\mu\log(\ell)+(\mu-\widehat p\lambda)\log(1-\ell)=h,
\end{equation}
for some $h\ge h_0,$ where $h_0:=H_V(V)$ is the minimum energy level attained by $H_V(d,\ell)$
on the unit square.\\
In the present framework, the straight line $r$ joining $S$ and $V$ is vertical, its equation being $d=\frac{\nu}{\lambda}.$ In order for Definition \ref{li} of linked together annuli to be still valid without the need of any change when considering annuli composed of level lines of $H_S$ in \eqref{hs-16} and of $H_V$ in \eqref{hv-16}, we define the ordering ``$\vartriangleleft$'' as a reversed comparison between the ordinate coordinates, 
 so that given $P=(d_P,\ell_P)$ and $Q=(d_Q,\ell_Q)$ belonging to $r,$ we have that $d_P=d_Q$ and it holds that $P\vartriangleleft\, Q$ (resp. $P\trianglelefteq\, Q$) if and only if $\ell_P>\ell_Q$ (resp. $\ell_P\ge\ell_Q$). See Figure \ref{16-la} for a graphical illustration of the new geometrical configuration.

\begin{figure}[ht]
\centering
\begin{minipage}[t]{0.48\textwidth}
\includegraphics[width=\textwidth,height=6cm]{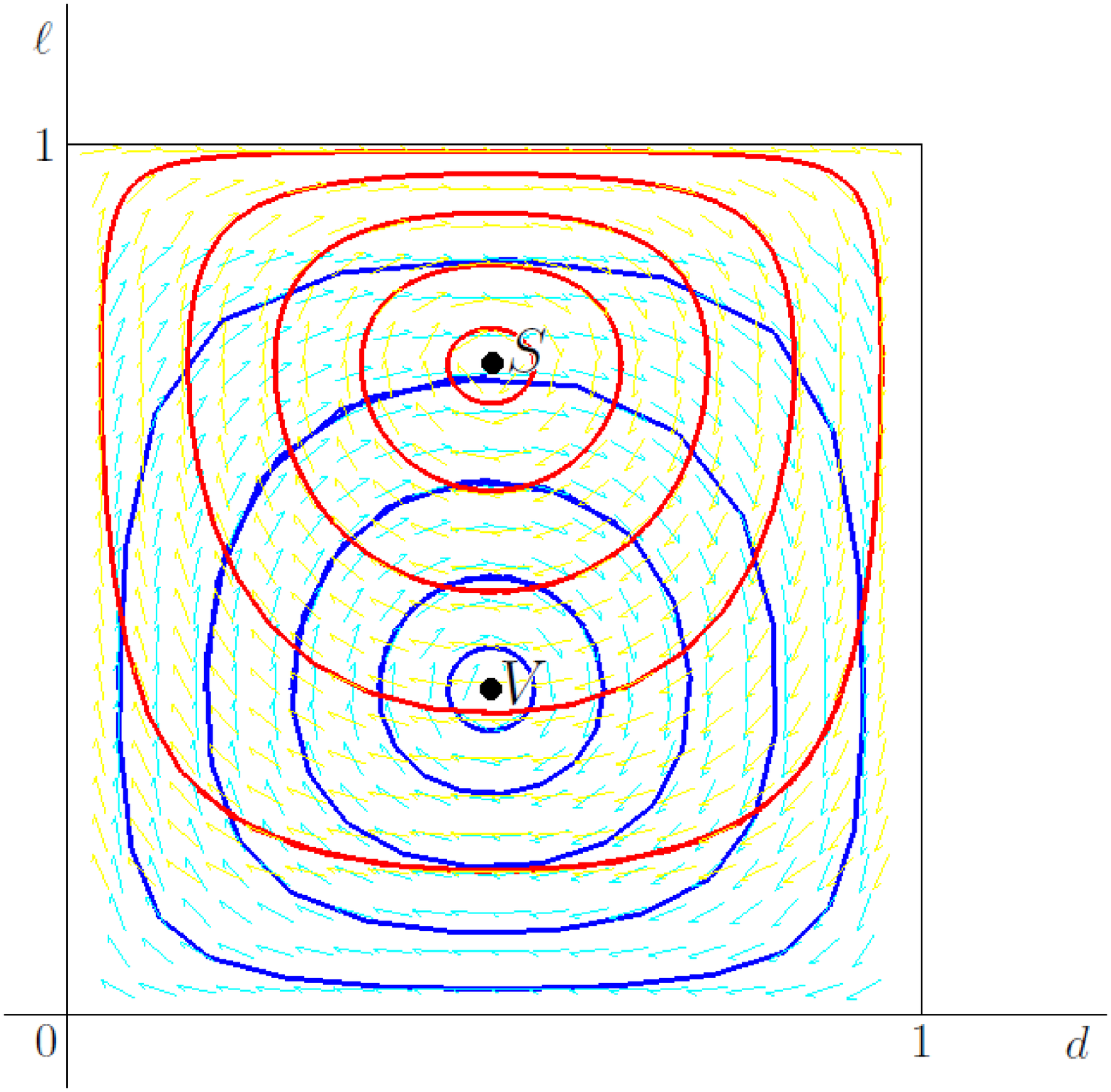}
\center{(A)}
\end{minipage}
\begin{minipage}[t]{0.48\textwidth}
\includegraphics[width=\textwidth,height=6cm]{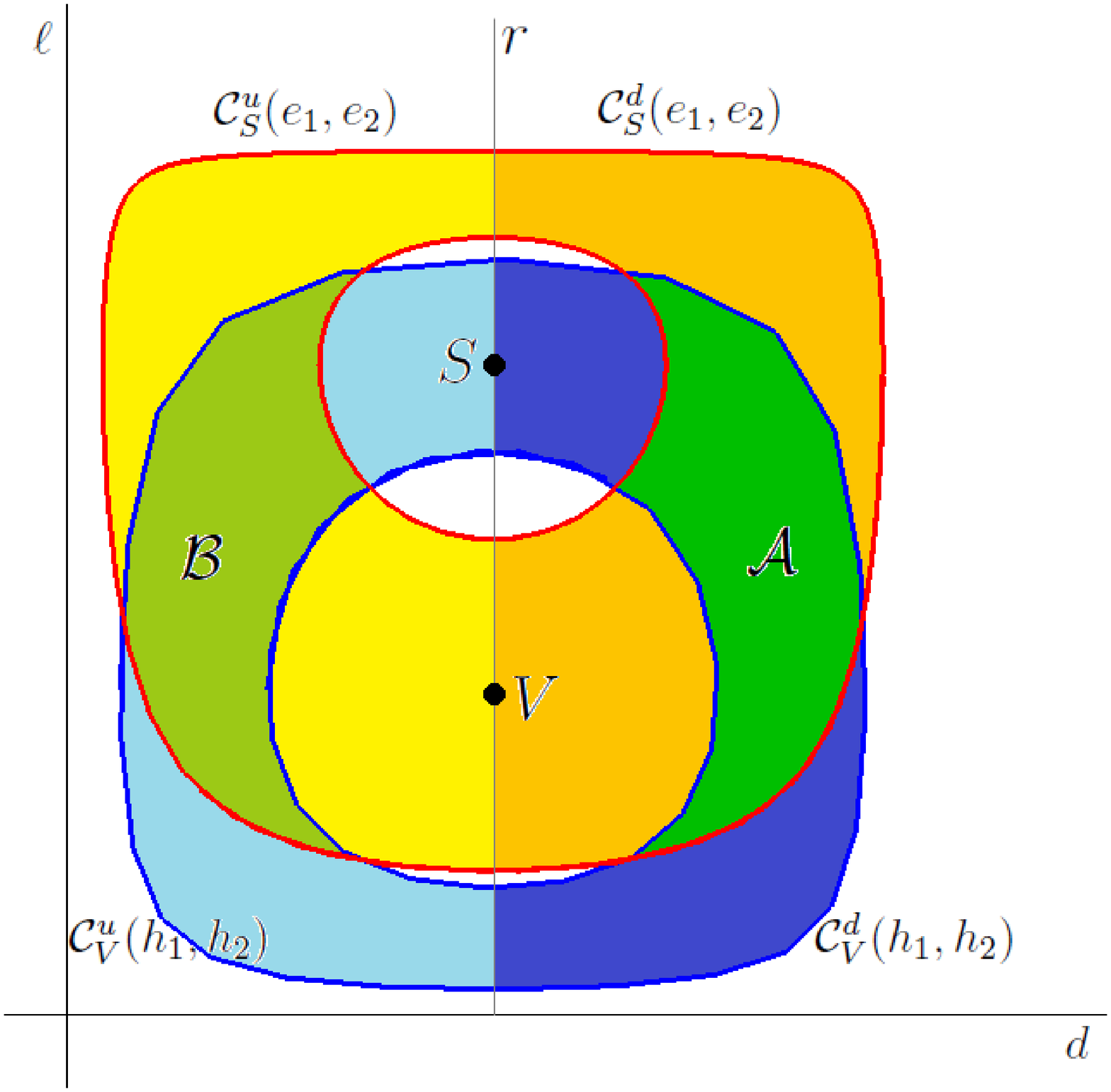}
\center{(B)}
\end{minipage}
\caption{In (A) we represent some energy level lines associated with System \eqref{16b}, in red, surrounding $S,$ as well as some energy level lines associated with System \eqref{16p}, in blue, surrounding $V.$ All of them are run clockwise. In (B), suitably choosing two level lines for each system, 
we obtain two linked together annuli, according to Definition \ref{li}. Calling them $\mathcal C_S(e_1,e_2)$ and $\mathcal C_V(h_1,h_2),$ we represent in yellow the set $\mathcal C_S^u(e_1,e_2)$ and in light blue the set $\mathcal C_V^u(h_1,h_2),$ both lying on the left of the vertical line $r,$ joining the centers $S$ and $V,$ while we represent in orange the set $\mathcal C_S^d(e_1,e_2)$ and in dark blue the set $\mathcal C_V^d(h_1,h_2),$ both lying on the right of $r.$ Notice that $\mathcal C_S^d(e_1,e_2)$ and $\mathcal C_V^d(h_1,h_2)$ cross in $\mathcal A$ (colored in dark green), and that $\mathcal C_S^u(e_1,e_2)$ and $\mathcal C_V^u(h_1,h_2)$ cross in $\mathcal B$ (colored in light green).}\label{16-la}
\end{figure}

Similar to Subsection \ref{31}, due to the seasonal variation in hypertension and the connection of the latter with perioperative adverse events, we can assume that $p$ in \eqref{16} alternate in a periodic fashion between a low and a high value, so that the dynamics are governed 
 by \eqref{16b} for $t\in[0,T_S)$, and by \eqref{16p} for $t\in[T_S,T_S+T_V).$ We suppose that the same alternation between the two regimes occurs with $T$-periodicity, where $T=T_S+T_V,$ i.e., we can assume that we are dealing with a system with periodic coefficients of the form
\begin{equation}\label{16t}
\left\{
\begin{array}{ll}
d\,'=d(1-d)\left(p(t)\lambda(t)\ell-\mu(t)\right)\\
\vspace{-2mm}\\
\ell\,'=\ell(1-\ell)p(t)\left(\nu(t)-\lambda(t) d\right)
\end{array}
\right. 
\end{equation}
where $x(t)\equiv x,$ for $x\in\{\lambda,\mu,\nu\},$ and for $p(t)$ it holds that
\begin{equation}\label{pt}
p(t)=\left\{
\begin{array}{ll}
p \quad & \mbox{for } \, t\in[0,T_S) \\
\vspace{-2mm}\\
\widehat p \quad & \mbox {for } \, t\in[T_S,T)
\end{array}
\right.
\end{equation} 
with $0<\mu<p\lambda$ and $0<\nu<\lambda.$ The function $p(t)$ is piecewise constant and it is supposed to be extended to the whole real
line by $T$-periodicity.\\
In regard to the Poincar\'e map $\Psi$ of System \eqref{16t}, it may be decomposed as $\Psi=\Psi_V\,\circ\Psi_S,$ where $\Psi_S$ is the Poincar\'e map associated with System \eqref{16b} for $t\in[0,T_S]$ and $\Psi_V$ is the Poincar\'e map associated with System \eqref{16p} for $t\in[0,T_V].$ Like for the framework analyzed in Subsection \ref{31}, the increasing monotonicity of the period of the orbits surrounding $S$ and $V,$ denoted respectively by $\tau_S(e)$ and $\tau_V(h),$ with the energy levels $e>e_0$ and $h>h_0,$ has been proven in \cite{Sc-85,Sc-90}. On the other hand, the analysis of the phase portrait shows that orbits surrounding $S$ and $V$ are both run clockwise this time. Since such dissimilarity with respect to the model proposed in \cite{Anea-18} will urge us to modify, with respect to Theorem \ref{app18}, some details in the proof of our result on the existence of chaotic dynamics for System \eqref{16t} (compare in particular the definitions of rotation number in \eqref{rns} and \eqref{rnv} with those in \eqref{rnsb} and \eqref{rnvb}, as well the estimates following from them), we present its statement in a precise manner, together with the main steps in its proof:
\begin{theorem}\label{app16}
For any choice of the parameters $0<\mu<p\lambda$ and $0<\nu<\lambda,$ defined like in System \eqref{16b}, and for any increase in the parameter $p,$ given the annulus $\mathcal C_S(e_1,e_2)$ around $S,$ for some $e_0<e_1<e_2,$ and the annulus $\mathcal C_V(h_1,h_2)$ around $V,$ for some $h_0<h_1<h_2,$ assume that they are linked together, calling $\mathcal A$ and $\mathcal B$ the connected components of $\mathcal C_S(e_1,e_2)\cap\mathcal C_V(h_1,h_2).$
Then, if $T_S>\frac{11\,\tau_S(e_1)\tau_S(e_2)}{2\,(\tau_S(e_2)-\tau_S(e_1))}$ and 
$T_V>\frac{9\,\tau_V(h_1)\tau_V(h_2)}{2\,(\tau_V(h_2)-\tau_V(h_1))},$ the 
Poincar\'e map $\Psi=\Psi_V\circ\Psi_S$ of System \eqref{16t} induces chaotic dynamics on two symbols in $\mathcal A$ and $\mathcal B,$ and thus
all the properties listed in Theorem \ref{th} are fulfilled for $\Psi$.
\end{theorem}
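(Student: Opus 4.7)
I would follow the strategy used in the proof of Theorem \ref{app18}, verifying that conditions $(C_F)$ and $(C_G)$ of Theorem \ref{th} hold with $F=\Psi_S$ and $G=\Psi_V$ for suitably oriented versions of the generalized rectangles $\mathcal{A}$ and $\mathcal{B}$, and then exchanging their roles to obtain chaotic dynamics in both. Concretely, split $\mathcal{C}_S(e_1,e_2) = \mathcal{C}_S^u(e_1,e_2) \cup \mathcal{C}_S^d(e_1,e_2)$ and $\mathcal{C}_V(h_1,h_2) = \mathcal{C}_V^u(h_1,h_2) \cup \mathcal{C}_V^d(h_1,h_2)$ along the vertical line $r$, with $u$ and $d$ now referring to the portions lying respectively to the left and to the right of $r$ (see Figure \ref{16-la}); set $\mathcal{A} := \mathcal{C}_S^d(e_1,e_2) \cap \mathcal{C}_V^d(h_1,h_2)$ and $\mathcal{B} := \mathcal{C}_S^u(e_1,e_2) \cap \mathcal{C}_V^u(h_1,h_2)$; and orient these rectangles exactly as in Theorem \ref{app18}, taking $\mathcal{A}^{-}_{l} := \mathcal{A} \cap \Gamma_S(e_1)$, $\mathcal{A}^{-}_{r} := \mathcal{A} \cap \Gamma_S(e_2)$, $\mathcal{B}^{-}_{l} := \mathcal{B} \cap \Gamma_V(h_1)$, $\mathcal{B}^{-}_{r} := \mathcal{B} \cap \Gamma_V(h_2)$.

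The only substantive adjustment is forced by the clockwise direction in which orbits of \eqref{16b} and \eqref{16p} are traversed. After performing the rototranslation that brings $S$ to the origin and makes $r$ coincide with the horizontal axis, the angular coordinate $\theta_S(t,(d_0,\ell_0))$ is nonincreasing along trajectories of \eqref{16b}; I would therefore define the rotation number with a reversed sign, setting $\rot_S(t,(d_0,\ell_0)) := (\theta_S(0,(d_0,\ell_0)) - \theta_S(t,(d_0,\ell_0)))/(2\pi)$, and introduce $\rot_V$ analogously after rototranslating around $V$. With these conventions $\rot_S$ and $\rot_V$ are again nonnegative, the monotonicity characterization corresponding to \eqref{rot} still holds, and the estimate $\rot_S(t,\gamma(0)) - \rot_S(t,\gamma(1)) > t\,(\tau_S(e_2) - \tau_S(e_1))/(\tau_S(e_1)\tau_S(e_2)) - 2$ from \eqref{rots} (together with its counterpart for $\rot_V$) carries over verbatim.

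Consequently, the hypothesis $T_S > \frac{11\,\tau_S(e_1)\tau_S(e_2)}{2(\tau_S(e_2)-\tau_S(e_1))}$ still forces $\rot_S(T_S,\gamma(0)) - \rot_S(T_S,\gamma(1)) > 7/2$ for any path $\gamma$ in $\mathcal{A}$ joining its two sides, and, since in the new coordinates $\mathcal{A}$ occupies an angular sector of width at most $\pi$ around $S$, the net angular sweep of the endpoints covers an interval of length at least $5\pi$. This yields two disjoint maximal parameter subintervals along $\gamma$ on each of which $\Psi_S$ sends the restricted path entirely across $\mathcal{B}$ from $\mathcal{B}^{-}_{l}$ to $\mathcal{B}^{-}_{r}$, furnishing the two compact sets $\mathcal{H}_0,\mathcal{H}_1$ required by $(C_F)$. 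A parallel estimate based on $T_V > \frac{9\,\tau_V(h_1)\tau_V(h_2)}{2(\tau_V(h_2)-\tau_V(h_1))}$ and $\rot_V$ produces one full crossing and verifies $(C_G)$. Theorem \ref{th} then delivers chaotic dynamics on two symbols in $\mathcal{A}$; reorienting via $\mathcal{A}^{-}_{l} := \mathcal{A} \cap \Gamma_V(h_1)$, $\mathcal{A}^{-}_{r} := \mathcal{A} \cap \Gamma_V(h_2)$, $\mathcal{B}^{-}_{l} := \mathcal{B} \cap \Gamma_S(e_1)$, $\mathcal{B}^{-}_{r} := \mathcal{B} \cap \Gamma_S(e_2)$ and repeating the argument yields the same conclusion on $\mathcal{B}$. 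The main obstacle is purely bookkeeping: one has to check that the combination of clockwise motion, verticality of $r$, and the reversed comparison $\vartriangleleft$ based on ordinates produces exactly the same pattern of left/right side crossings as the counterclockwise configuration of Theorem \ref{app18}, so that the crossing counts and the thresholds on $T_S$ and $T_V$ transfer without change.
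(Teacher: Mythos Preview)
Your proposal is correct and follows essentially the same route as the paper's own proof: the same splitting of the annuli along the vertical line $r$, the same orientations of $\mathcal A$ and $\mathcal B$, the sign-reversed rotation numbers (the paper denotes them $\Rot_S,\Rot_V$) to account for the clockwise motion, and the same transfer of the estimates \eqref{rots} and the thresholds on $T_S,T_V$. The paper spells out the explicit angular windows $[-2(n^{\ast}+i)\pi,-2(n^{\ast}+i)\pi+\pi]$ and $[-2n^{\ast\ast}\pi+\pi,-2n^{\ast\ast}\pi+2\pi]$ used to define $\mathcal H_i$ and to verify the crossing in $(C_G)$, which is precisely the bookkeeping you flag as the remaining obstacle.
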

\begin{proof}
Given the linked together annuli $\mathcal C_S(e_1,e_2)$ and $\mathcal C_V(h_1,h_2),$ let us set $\mathcal C_S(e_1,e_2)=\mathcal C_S^u(e_1,e_2)\cup\mathcal C_S^d(e_1,e_2)$ and $\mathcal C_V(h_1,h_2)=\mathcal C_V^u(h_1,h_2)\cup\mathcal C_V^d(h_1,h_2),$ where $\mathcal C_S^u(e_1,e_2)$ (resp. $\mathcal C_S^d(e_1,e_2)$) is the subset of $\mathcal C_S(e_1,e_2)$ which lies on the left (resp. on the right) of the vertical line $r,$ joining $S$ and $V,$ and, analogously, $\mathcal C_V^u(h_1,h_2)$ (resp. $\mathcal C_V^d(h_1,h_2)$) is the subset of $\mathcal C_V(h_1,h_2)$ which lies on the left (resp. on the right) of the vertical line $r.$ See Figure \ref{16-la} (B) for a graphical illustration.\footnote{We stress that in this case the specifications $u$ and $d,$ staying respectively for ``up'' and ``down'', are meant with respect to the horizontal axis, before performing the rototranslation of $\mathbb R^2$ that brings the origin to the point $S$ and makes the horizontal axis coincide with the vertical line $r.$ As we shall see below, we need that rototranslation in order to introduce a system of generalized polar coordinates centered at $S,$ in view of defining the rotation number in \eqref{rnsb}. We will then perform the same operation with respect to $V,$ as well.}
Let us also set $\mathcal A:=\mathcal C_S^d(e_1,e_2)\cap \mathcal C_V^d(h_1,h_2)$ and $\mathcal B:=\mathcal C_S^u(e_1,e_2)\cap\mathcal C_V^u(h_1,h_2).$ We are going to show that, when orienting them by setting ${\mathcal A}^-=\mathcal A^{-}_{l}\cup\mathcal A^{-}_{r}$ and ${\mathcal B}^-=\mathcal B^{-}_{l}\cup\mathcal B^{-}_{r},$ with $\mathcal A^{-}_{l}:=\mathcal A\cap\Gamma_S(e_1),\,\mathcal A^{-}_{r}:=\mathcal A\cap\Gamma_S(e_2),\,\mathcal B^{-}_{l}:=\mathcal B\cap\Gamma_V(h_1),\,\mathcal B^{-}_{r}:=\mathcal B\cap\Gamma_V(h_2),$ then there exist two disjoint compact subsets ${\mathcal H_0},\,{\mathcal H_1}$ of ${\mathcal A}$
such that $\displaystyle{({\mathcal H}_i,\Psi_S): {\widetilde{\mathcal A}}\stretchx\, {\widetilde{\mathcal B}}},$ for $i=0,1,$ and that
$\displaystyle{\Psi_V: {\widetilde{\mathcal B}}\stretchx\, {\widetilde{\mathcal A}}}.$ 
If this is true, the Poincar\'e map $\Psi=\Psi_V\circ\Psi_S$ of System \eqref{16t} induces chaotic dynamics on two symbols in $\mathcal A$ 
and thus, since $\Psi$ is a homeomorphism on the open unit square, all the properties listed in Theorem \ref{th} are fulfilled for $\Psi$.\\
In order to check the former stretching relation, we introduce a system of generalized polar coordinates centered at $S,$ as explained in the proof of Theorem \ref{app18}, assuming to have performed the rototranslation of $\mathbb R^2$ that brings the origin to the point $S$ and makes the horizontal axis coincide with the vertical line $r.$ Since orbits for System \eqref{16b} are run clockwise, in order to count 
positive the turns around $S$ in the clockwise sense, we define the rotation number, describing the normalized angular displacement during the time interval $[0,t]\subseteq [0,T_S]$ of the solution 
$\varsigma_S(t\,,(d_0,\ell_0))$ to System \eqref{16b} with initial point in $(d_0,\ell_0)\in (0,1)^2,$ as 
\begin{equation}\label{rnsb}
\Rot_S(t,(d_0,\ell_0)):=\frac{\theta_S(0,(d_0,\ell_0))-\theta_S(t,(d_0,\ell_0))}{2\pi}\,.
\end{equation}
For $e>e_0,$ as a consequence of the star-shapedness with respect to $S$ of the lower contour sets $\{(d,\ell)\in (0,1)^2:H_S(d,\ell)\le e\},$
 with $H_S$ as in \eqref{hs-16}, conditions in \eqref{rot} are satisfied by $\Rot_S,$ too.
Let us now consider a generic path $\gamma:[0,1]\to{\mathcal A}$ with $\gamma(0)\in\mathcal A^{-}_{l},\,\gamma(1)\in\mathcal A^{-}_{r}$ and let us check that $T_S\ge\frac{11\,\tau_S(e_1)\tau_S(e_2)}{2\,(\tau_S(e_2)-\tau_S(e_1))}$ implies that $\theta_S(T_S,\gamma(1))-\theta_S(T_S,\gamma(0))>5\pi.$ Namely, like in \eqref{rots}, for every $t> 0$ it holds that 
$\Rot_S(t,\gamma(0)) - \Rot_S(t,\gamma(1))> t\, \frac{\tau_S(e_2) - \tau_S(e_1)}{\tau_S(e_1)\,\tau_S(e_2)}-2,$ and thus, 
for $T_S\ge\frac{11\,\tau_S(e_1)\tau_S(e_2)}{2\,(\tau_S(e_2)-\tau_S(e_1))},$ we have
$\Rot_S(T_S,\gamma(0)) - \Rot_S(T_S,\gamma(1))>3.$
Hence, $\theta_S(T_S,\gamma(1)) - \theta_S(T_S,\gamma(0))>6\pi+\theta_S(0,\gamma(1))-\theta_S(0,\gamma(0)).$ Since 
$\theta_S(0,\gamma(0)),\,\theta_S(0,\gamma(1))\in [-\pi,0]$ because $\overline\gamma\subset\mathcal A:=\mathcal C_S^d(e_1,e_2)\cap\mathcal C_V^d(h_1,h_2),$ it holds that $\theta_S(0,\gamma(1))-\theta_S(0,\gamma(0))>-\pi,$ from which it follows that
$\theta_S(T_S,\gamma(1)) - \theta_S(T_S,\gamma(0))>5\pi.$ 
As a consequence, there exists $n^{\ast}\in\mathbb N\setminus\{0\}$ such that 
$[-2(n^{\ast}+1)\pi,-2(n^{\ast}+1)\pi+\pi]$ and $[-2n^{\ast}\pi,-2n^{\ast}\pi+\pi]$ are contained in the interval 
$\{\theta_S(T_S,\gamma(\lambda)):\lambda\in[0,1]\}.$ Hence, by Bolzano theorem, there exist two disjoint maximal intervals 
$[\lambda_0',\lambda_0''],\,[\lambda_1',\lambda_1'']$ of $[0,1]$ such that for $i\in\{0,1\}$ it holds that
$\{\theta_S(T_S,\gamma(\lambda)):\lambda\in[\lambda_i',\lambda_i'']\}\subseteq[-2(n^{\ast}+i)\pi,-2(n^{\ast}+i)\pi+\pi],$
with $\theta_S(T_S,\gamma(\lambda_i'))=-2(n^{\ast}+i)\pi$ and $\theta_S(T_S,\gamma(\lambda_i''))=-2(n^{\ast}+i)\pi+\pi.$
It is easy to check that we can then set 
${\mathcal H}_i:=\{(d_0,\ell_0)\in\mathcal A:\theta_S(T_S,(d_0,\ell_0))\in[-2(n^{\ast}+i)\pi,-2(n^{\ast}+i)\pi+\pi]\}$ for $i\in\{0,1\}$
in order to have the stretching relation $\displaystyle{({\mathcal H}_i,\Psi_S): {\widetilde{\mathcal A}}\stretchx\, {\widetilde{\mathcal B}}}$ satisfied.\\
In order to prove that $\displaystyle{\Psi_V: {\widetilde{\mathcal B}}\stretchx\, {\widetilde{\mathcal A}}},$ we introduce a system of generalized polar coordinates, now centered at $V,$ defining in particular the rotation number as 
\begin{equation}\label{rnvb}
\Rot_V(t,(d_0,\ell_0)):=\frac{\theta_V(0,(d_0,\ell_0))-\theta_V(t,(d_0,\ell_0))}{2\pi}\,,
\end{equation}
since orbits for System \eqref{16p} are still run clockwise.
Let $\sigma:[0,1]\to{\mathcal B}$ be any path such that $\sigma(0)\in\mathcal B^{-}_{l}$ and
$\sigma(1)\in\mathcal B^{-}_{r}.$ Then, similar to what proven for $\Rot_S,$ we have
$\Rot_V(t,\sigma(0)) - \Rot_V(t,\sigma(1))>t\, \frac{\tau_V(h_2) - \tau_V(h_1)}{\tau_V(h_1)\,\tau_V(h_2)}\, - 2,$ and therefore if $T_V\ge\frac{9\,\tau_V(h_1)\tau_V(h_2)}{2\,(\tau_V(h_2)-\tau_V(h_1))}$ it follows that $\Rot_V(T_V,\sigma(0)) - \Rot_V(T_V,\sigma(1))>2,$
from which $\theta_V(T_V,\sigma(1))-\theta_V(T_V,\sigma(0))>3\pi,$ because $\overline\sigma\subset\mathcal B:=\mathcal C_S^u(e_1,e_2)\cap\mathcal C_V^u(h_1,h_2),$ and thus $\theta_V(0,\sigma(0)),\,\theta_V(0,\sigma(1))\in [0,\pi],$ implying that $\theta_V(0,\sigma(1))-\theta_V(0,\sigma(0))>-\pi.$\\
Hence, there exists $n^{\ast\ast}\in\mathbb N\setminus\{0\}$ such that 
$[-2n^{\ast\ast}\pi+\pi,-2n^{\ast\ast}\pi+2\pi]\subset\{\theta_V(T_V,\sigma(\lambda)):\lambda\in[0,1]\}$ and, consequently, there exists 
$[\lambda',\lambda'']\subseteq [0,1]$ such that 
$\{\theta_V(T_V,\sigma(\lambda)):\lambda\in[\lambda',\lambda'']\}\subseteq[-2n^{\ast\ast}\pi+\pi,-2n^{\ast\ast}\pi+2\pi],$
with $\theta_V(T_V,\sigma(\lambda'))=-2n^{\ast\ast}\pi+\pi$ and $\theta_V(T_V,\sigma(\lambda''))=-2n^{\ast\ast}\pi+2\pi.$
This easily allows to verify the stretching relation 
$\displaystyle{\Psi_V: {\widetilde{\mathcal B}}\stretchx\, {\widetilde{\mathcal A}}}.$\\
Having proved that conditions $(C_F)$ and $(C_G)$ in Theorem \ref{th} are fulfilled for the oriented rectangles ${\widetilde{\mathcal A}} := ({\mathcal A},{\mathcal A}^-)$ and
${\widetilde{\mathcal B}} := ({\mathcal B},{\mathcal B}^-)$ with $F=\Psi_S$ and $G=\Psi_V,$ it follows that $\Psi=\Psi_V\circ\Psi_S$
induces chaotic dynamics on two symbols in $\mathcal A.$ Since $\Psi$ is a homeomorphism on the open unit square, it is injective on the set ${\mathcal H}:=({\mathcal H}_0\cup{\mathcal H}_1)\cap{\Psi_S}^{-1}(\mathcal B),$ and thus
all the properties listed in Theorem \ref{th} are fulfilled for $\Psi$.\\
In order to check that $\Psi$ induces chaotic dynamics on two symbols in $\mathcal B,$ too, we orientate $\mathcal A$ by setting $\mathcal A^{-}_{l}:=\mathcal A\cap\Gamma_V(h_1),\,\mathcal A^{-}_{r}:=\mathcal A\cap\Gamma_V(h_2),\,\mathcal B^{-}_{l}:=\mathcal B\cap\Gamma_S(e_1),\,\mathcal B^{-}_{r}:=\mathcal B\cap\Gamma_S(e_2),$ and we should
show that $\displaystyle{({\mathcal K}_i,\Psi_S): {\widetilde{\mathcal B}}\stretchx\, {\widetilde{\mathcal A}}}$ for suitable compact disjoint subsets ${\mathcal K}_i$ of $\mathcal B,$ for $i\in\{0,1\},$ as well as that $\displaystyle{\Psi_V: {\widetilde{\mathcal A}}\stretchx\, {\widetilde{\mathcal B}}}.$ The verification of the details is omitted.
Then, by Theorem \ref{th} it holds that $\Psi$ induces chaotic dynamics on two symbols in $\mathcal B,$ as desired, and the proof is complete. 
\end{proof}$\hfill\square$ 

We now provide a numerical example about the model proposed in \cite{Anea-16}, in which the existence of chaotic dynamics
can be shown by using Theorem \ref{app16}.

\begin{example}\label{16ex}
Taking $p=0.1<\widehat p=0.2,\,q_D=0.4,\,q_{ND}=0.6,\,R=130,\,K=70,\,C_D=18,\,C_{ND}=15,\,C_L=30,\,$ 
we obtain $E_D=q_D R-(1-q_D)K=10,\,E_{ND}=q_{ND} R-(1-q_{ND})K=50,$ and consequently
Systems \eqref{16b}, \eqref{16p} with $\lambda=E_{ND}-E_D=40,\,\mu=C_D-C_{ND}=3,\,\nu=E_{ND}-C_L=20.$
Since $0<\mu<p\lambda$ and $0<\nu<\lambda,$ System \eqref{16b} has a center in $S=\left(\frac{\nu}{\lambda},\frac{\mu}{p\lambda}\right)=(0.5,0.75),$ while System \eqref{16p} has a center in $V=\left(\frac{\nu}{\lambda},\frac{\mu}{\widehat p\lambda}\right)=(0.5,0.375).$ This is the framework represented in Figure \ref{16-la}.\\
In particular, as shown in Figure \ref{16-la} (B), two linked together annuli $\mathcal C_S(e_1,e_2)$ and $\mathcal C_V(h_1,h_2)$ can be obtained for $e_1=5.4,\,e_2=8.3,\,h_1=12.1,\,h_2=16.2,$ intersecting in the two disjoint generalized rectangles denoted by $\mathcal A$ and $\mathcal B.$ Software-assisted computations show that 
$\tau_S(e_1)\approx 7.8 <\tau_S(e_2)\approx 11.3$ and $\tau_V(h_1)\approx 3.6<\tau_V(h_2)\approx 4.5.$ Hence, Theorem \ref{app16} guarantees the existence of chaotic dynamics for the Poincar\'e map $\Psi=\Psi_V\circ\Psi_S$ associated with System \eqref{16t} provided that 
$T_S>\frac{11\,\tau_S(e_1)\tau_S(e_2)}{2\,(\tau_S(e_2)-\tau_S(e_1))}\approx 138.5$ and 
$T_V>\frac{9\,\tau_V(h_1)\tau_V(h_2)}{2\,(\tau_V(h_2)-\tau_V(h_1))}\approx 81.$ Thus, considering a period of one year, and assuming that the hot and the cold days are equally distributed across the year, together with their opposite effect on hypertension phenomena (see e.g. \cite{Atea-08,Deea-12}), so that $T_S=T_V=\frac{365}{2},$ it is then possible to apply Theorem \ref{app16} to conclude that $\Psi$ induces chaotic dynamics on two symbols in $\mathcal A$ and $\mathcal B.$ 
\end{example}

\section{A biological framework with intraspecific competition}\label{sec-4}

We now provide an ecological interpretation of the model in \eqref{zan}, connected with intraspecific competition and environmental carrying capacity in predator-prey models. Its precise formulation is given by 
\begin{equation}\label{bio}
\left\{
\begin{array}{ll}
x\,'=r_x\, x\,\left(1-\frac{x}{K_x}\right)(\alpha-\beta y)\\
\vspace{-2mm}\\
y\,'=r_y\, y\,\left(1-\frac{y}{K_y}\right)(-\gamma+\delta x)
\end{array}
\right. 
\end{equation}
with $x(t),\,y(t)>0$ representing the size of the prey and of the predator populations,
respectively, and $\alpha,\,\beta,\,\gamma,\,\delta,\,r_x,\,r_y,\,K_x,\,K_y$ positive constants.
In particular, if $0<\alpha<\beta$ and $0<\gamma<\delta$ all orbits are closed and periodic, surrounding the unique internal equilibrium $S=\left(\frac{\gamma}{\delta},\frac{\alpha}{\beta}\right),$ which is a center. With respect to the classical Lotka-Volterra model (cf. for instance \cite{PiZa-08}), the model in \eqref{bio} encompasses the logistic terms $r_x\, \left(1-\frac{x}{K_x}\right)$ and $r_y\, \left(1-\frac{y}{K_y}\right),$ which take into account intra-species interaction and the role of environmental resources. More precisely, the parameters $r_x$ and $r_y$ are called the intrinsic (or inherent) growth rate for the two populations, while the parameters $K_x$ and $K_y$ are called environmental carrying capacity for the two species, as they describe the maximum population size that the resources of the environment can carry over a period of time (see \cite{Beea-06} for further ecological details).
Such biological interpretation of the model has been briefly suggested in \cite{Haea-07}, where however the focus is on the celebrated growth-cycle model by Goodwin \cite{Go-67,Go-72}, describing the dynamics of the wage share of output and the employment proportion. 
Actually, Harvie et al. in \cite{Haea-07} propose a system of differential equations in which each variable has both a positive and a
negative effect on its own growth rate, while in \eqref{bio} we focus just on the latter, in agreement with the models analyzed in Section \ref{sec-3}.\footnote{Still for the sake of conformity with the previous section, where the state variables $d(t)$ and $\ell(t)$ could vary just in $[0,1],$ being population shares, in Examples \ref{bioer} and \ref{bioek} we will confine ourselves to carrying capacities not exceeding unity, although $x(t)$ and $y(t)$ in \eqref{bio} can assume any positive value. In this manner those examples could be interpreted also in economic terms, concerning the wage share of output and the employment proportion, in line with \cite{Haea-07}. Of course, it would be easy to modify Examples \ref{bioer} and \ref{bioek} in order to allow $x(t)$ and $y(t)$ to exceed unity.}\\
In the existing literature, in view of taking into account the changes over time of the habitat conditions and of the available resources (see e.g. \cite{Ayea-12} for an empirical study concerning salmonids), some authors have introduced periodic variations in carrying capacities and intrinsic growth rates in related settings. Namely, in \cite{Le-16} the focus in on the effect of a periodic variation in the value of the carrying capacity for a species described by the logistic equation, while in
\cite{NiGu-76} a numerical investigation of the effect of a periodic variation in the values of the intrinsic growth rate and of the carrying capacity for a species described by the logistic equation with a time delay is performed.
Moreover, a periodic variation in the carrying capacity has been considered in \cite{BoWa-07,CuHe-02} for the Beverton-Holt equation, related to the logistic equation, as well as in \cite{BoSt-15}, where also the intrinsic growth rate is assumed to vary in a periodic fashion in a quantum calculus version of the classical Beverton-Holt equation. In the same perspective, we can assume that $r_x$ and $r_y,$ as well as $K_x$ and $K_y,$ periodically alternate between two different values, e.g. due to a seasonal effect. This allows us to enter the setting of Linked Twist Maps and to apply Theorem \ref{th} to prove the existence of chaotic dynamics induced by the related Poincar\'e map, although neither the intrinsic growth rates nor the carrying capacities of the two populations affect the center position. Namely, as shown in different contexts e.g. in \cite{BuZa-09,PaZa-13}, a geometrical configuration connected with LTMs may be obtained even when the center position is not affected by a variation of a certain parameter, as long as the latter suitably influences the shape of the orbits. We show two such frameworks in Figures \ref{biofr} and 
\ref{biofk}. The former is related to Example \ref{bioer}, where we focus on a variation of the intrinsic growth rates, while the latter illustrates Example \ref{bioek}, where we will deal with periodic oscillations in the two populations carrying capacities.
For clarity's sake, differently from \cite{BoSt-15,NiGu-76}, we will consider the variation in the intrinsic growth rates separately from the variation in the carrying capacities, in order to better understand the effect produced by each parameter on the orbits shape. Comparing Example \ref{bioer} with Example \ref{bioek}, we notice that raising the value of the intrinsic growth rate or of the carrying capacity for population $i\in\{x,y\}$ stretches orbits along the direction of the coordinate $i$-axis.
On the other hand, increasing carrying capacities enlarges the region in which orbits may lie, while increasing intrinsic growth rates does not. This difference is not very apparent in the framework considered in Example \ref{bioek}, where we will confine ourselves to carrying capacities close to unity, that is the value considered in Example \ref{bioer}, as explained in Footnote 12.\\
Turning to more technical aspects, since orbits of System \eqref{bio} are run counterclockwise, the proof of Theorem \ref{appbio}, concerning the framework with a periodic variation in intrinsic growth rates, will look similar to that of Theorem \ref{app18}, and thus it will be omitted for brevity's sake. For the framework with a periodic variation in carrying capacities, we will not even report the statement of our result about the existence of chaotic dynamics, limiting ourselves to some comments concerning it. On the other hand, since the center position is not influenced by a change in intrinsic growth rates or in carrying capacities, but the shape of the orbits is,
notation and, most importantly, the definition of linked together annuli need to be modified accordingly and also the statement of Theorem \ref{appbio} will be affected by those changes, together with the numerical Examples \ref{bioer} and \ref{bioek}. We refer the reader to Subsection \ref{31} for the remaining unchanged points.\\
Focusing at first on a variation in intrinsic growth rates in \eqref{bio}, let us assume that they alternate between certain values for the two populations, denoted by $r_x^{(1)}$ and $r_y^{(1)},$ for $t\in[0,T^{(1)})$, and different values, denoted by $r_x^{(2)}$ and $r_y^{(2)},$ for $t\in[T^{(1)},T^{(1)}+T^{(2)}).$\footnote{We stress that, in such manner, we are supposing that the switches between the different values of the intrinsic growth rates occur at the same time for both populations $x$ and $y.$ This is also the case that we will
consider in Example \ref{bioer}, in which we assume that periodic simultaneous exchanges between the intrinsic growth rate values of the two populations occur. However, more general frameworks in which the switches between the different values of the intrinsic growth rates are not simultaneous for the two populations could be easily analyzed as well, considering four dynamical regimes, rather than two. The same remark applies to the framework describing a periodic variation in the value of carrying capacities and to Example \ref{bioek}.}
Supposing that the same alternation between the two regimes occurs with $T$-periodicity, where $T=T^{(1)}+T^{(2)},$ we can assume that we are dealing with a system with periodic coefficients of the form
\begin{equation}\label{biot}
\left\{
\begin{array}{ll}
x\,'=r_x(t)\, x\,\left(1-\frac{x}{K_x(t)}\right)(\alpha(t)-\beta(t) y)\\
\vspace{-2mm}\\
y\,'=r_y(t)\, y\,\left(1-\frac{y}{K_y(t)}\right)(-\gamma(t)+\delta(t) x)
\end{array}
\right. 
\end{equation}
where $c(t)\equiv c,$ for $c\in\{K_x,K_y,\alpha,\beta,\gamma,\delta\},$ and for $r_i(t),$ with $i\in\{x,y\},$ it holds that
\begin{equation}\label{rt}
r_i(t)=\left\{
\begin{array}{ll}
r_i^{(1)} \quad & \mbox{for } \, t\in[0,T^{(1)}) \\
\vspace{-2mm}\\
r_i^{(2)} \quad & \mbox {for } \, t\in[T^{(1)},T)
\end{array}
\right.
\end{equation} 
with $0<\alpha<\beta$ and $0<\gamma<\delta.$ The functions $r_i(t),$ with $i\in\{x,y\},$ are piecewise constant and they are supposed to be extended to the whole real line by $T$-periodicity.\\
As observed above, the center coincides with $S=\left(\frac{\gamma}{\delta},\frac{\alpha}{\beta}\right)$ both when the intrinsic growth rates assume values $r_x^{(1)}$ and $r_y^{(1)},$ and thus we are in the regime whose dynamics are governed by the system that we will call $(1),$ as well as when they assume values $r_x^{(2)}$ and $r_y^{(2)},$ and thus we are in the regime whose dynamics are governed by the system that we will call $(2).$ In the former case, the orbits have equation
$$
\begin{array}{lll}
H^{(1)}(x,y)&\!\!\!\! = &\!\!\!\!\frac{1}{r_y^{(1)}}\left(-\alpha\log(y)+(\alpha-\beta K_y)\log(K_y-y)\right)+\\
&&\!\!\!\!\frac{1}{r_x^{(1)}}\left(-\gamma\log(x)+(\gamma-\delta K_x)\log(K_x-x)\right)=e,
\end{array}
$$
for some $e\ge e_0^{(1)},$ while, in the latter case, the orbits have equation
$$
\begin{array}{lll}
H^{(2)}(x,y)&\!\!\!\! = &\!\!\!\!\frac{1}{r_y^{(2)}}\left(-\alpha\log(y)+(\alpha-\beta K_y)\log(K_y-y)\right)+\\
&&\!\!\!\!\frac{1}{r_x^{(2)}}\left(-\gamma\log(x)+(\gamma-\delta K_x)\log(K_x-x)\right)=h,
\end{array}
$$
for some $h\ge h_0^{(2)},$ where $e_0^{(1)}$ and $h_0^{(2)}$ are the minimum energy levels attained by $H^{(1)}(x,y)$ and $H^{(2)}(x,y)$ on the the open rectangle $(0,K_x)\times(0,K_y),$ respectively, i.e., $e_0^{(1)}=H^{(1)}(S)$ and $h_0^{(2)}=H^{(2)}(S).$\\
The sets $\Gamma^{(1)}(e)=\{(x,y)\in (0,K_x)\times(0,K_y):H^{(1)}(x,y)=e\},$ for $e>e_0^{(1)},$ and 
$\Gamma^{(2)}(h)=\{(x,y)\in (0,K_x)\times(0,K_y):H^{(2)}(x,y)=h\},$ for $h>h_0^{(2)},$ are simple closed curves surrounding $S.$
We call {\it annulus around} $S$ {\it for System} (1) any set $\mathcal C^{(1)}(e_1,e_2)=\{(x,y)\in (0,K_x)\times(0,K_y):e_1\le H^{(1)}(x,y)\le e_2\}$ with $e_0^{(1)}<e_1<e_2,$ whose inner boundary coincides with $\Gamma^{(1)}(e_1)$ and whose outer boundary coincides with $\Gamma^{(1)}(e_2).$ Similarly, we call {\it annulus around} $S$ {\it for System} (2) any set $\mathcal C^{(2)}(h_1,h_2)=\{(x,y)\in (0,K_x)\times(0,K_y):h_1\le H^{(2)}(x,y)\le h_2\}$ with 
$h_0^{(2)}<h_1<h_2,$ whose inner boundary coincides with $\Gamma^{(2)}(h_1)$ and whose outer boundary coincides with $\Gamma^{(2)}(h_2).$\\
In the present framework, when considering annuli composed of level lines of $H^{(1)}$ and $H^{(2)},$ the definition of linked together annuli can be easily given by looking at the intersection points between the annuli and the horizontal line $\ell$ passing through $S,$ having equation $y=\frac{\alpha}{\beta}.$ The ordering
``$\vartriangleleft$'' in this case simply consists in a comparison between the $x$-coordinates,\footnote{We stress that a comparison between the vertical coordinates, as described in Subsection \ref{32}, would work as well. However, the consideration of the horizontal line $\ell$ is useful also in view of introducing the system of generalized polar coordinates centered at $S,$ that we need in the proof of Theorem \ref{appbio}, here omitted due to its similarity with the proof of Theorem \ref{app18}.} so that, given $P=(x_P,y_P)$ and $Q=(x_Q,y_Q)$ belonging to $\ell,$ we have that $y_P=y_Q$ and it holds that $P\vartriangleleft\, Q$ (resp. $P\trianglelefteq\, Q$) if and only if $x_P<x_Q$ (resp. $x_P\le x_Q$). 
In particular, assume that, like in Example \ref{bioer}, it holds that $r_x^{(1)}>r_x^{(2)}$ and $r_y^{(1)}<r_y^{(2)},$ so that orbits are  compressed along the $x$-direction and stretched along the $y$-direction passing from System (1) to System (2).
Then, taking inspiration from Figure \ref{biofr}, we introduce the definition of linked together annuli for the considered setting as follows:\footnote{Definition \ref{lim} can be easily modified when $r_x^{(1)}<r_x^{(2)}$ and $r_y^{(1)}>r_y^{(2)},$ so that orbits are stretched along the $x$-direction and compressed along the $y$-direction passing from System (1) to System (2). In the other cases (i.e., $r_x^{(1)}<r_x^{(2)}$ and $r_y^{(1)}<r_y^{(2)},$ or $r_x^{(1)}>r_x^{(2)}$ and $r_y^{(1)}>r_y^{(2)}$) it is sometimes possible to find linked together annuli, according to the considered parameter configurations.
For brevity's sake, we will not deepen such point, focusing in what follows just on the case $r_x^{(1)}>r_x^{(2)}$ and $r_y^{(1)}<r_y^{(2)},$ also in view of Example \ref{bioer}.} 
\begin{definition}\label{lim}
Given the annuli $\mathcal C^{(1)}(e_1,e_2)$ and $\mathcal C^{(2)}(h_1,h_2)$ around $S,$ we say that they are linked together if
$$S^{\,(1)}_{2,-}\vartriangleleft\, S^{\,(1)}_{1,-}\trianglelefteq\, S^{\,(2)}_{2,-}\vartriangleleft\, S^{\,(2)}_{1,-}\triangleleft\, S^{\,(2)}_{1,+}\vartriangleleft\, S^{\,(2)}_{2,+}\trianglelefteq\, S^{\,(1)}_{1,+}\vartriangleleft\, S^{\,(1)}_{2,+}$$
where, for $i\in\{1,2\},$ $S^{\,(1)}_{i,-}$ and $S^{\,(1)}_{i,+}$ denote the intersection points\footnote{Notice that, for $e_i>e_0^{(1)}$ and $h_i>h_0^{(2)},\,i\in\{1,2\},$ the boundary sets $\Gamma^{(1)}(e_i)$ and $\Gamma^{(2)}(h_i)$ intersect the straight line $\ell$ in exactly two points because 
$\{(x,y)\in (0,K_x)\times(0,K_y):H^{(1)}(x,y)\le e\}$ and $\{(x,y)\in (0,K_x)\times(0,K_y):H^{(2)}(x,y)\le h\}$ are star-shaped for all $e>e_0^{(1)}$ and for every $h>h_0^{(2)},$ respectively.} 
between $\Gamma^{(1)}(e_i)$ and the straight line $\ell,$ with 
$S^{\,(1)}_{i,-}\vartriangleleft\, S\vartriangleleft\, S^{\,(1)}_{i,+},$ and, similarly, $S^{\,(2)}_{i,-}$ and $S^{\,(2)}_{i,+}$ denote the intersection points between $\Gamma^{(2)}(h_i)$ and $\ell,$ with $S^{\,(2)}_{i,-}\vartriangleleft\, S\vartriangleleft\, S^{\,(2)}_{i,+}.$
\end{definition}
\noindent
Under the maintained assumptions, looking also at Figure \ref{biofr}, we deduce that, when an annulus composed of level lines of $H^{(1)}$ is linked with an annulus composed of level lines of $H^{(2)},$ then those two annuli cross in four pairwise disjoint generalized rectangles, differently from the frameworks considered in Section \ref{sec-3}, where only two intersections occurred.
In particular, each generalized rectangle contains a chaotic set, when the switching times between Systems (1) and (2) are large enough. Namely, in such eventuality, the presence of complex dynamics may be proved by applying Theorem \ref{th} to any pair composed by two of those generalized rectangles, when dealing with the Poincar\'e maps associated with Systems (1) and (2). Indeed, the Poincar\'e map $\Psi$ of System \eqref{biot} may be decomposed as $\Psi=\Psi^{(2)}\,\circ\Psi^{(1)},$ where $\Psi^{(1)}$ is the Poincar\'e map associated with System (1) for $t\in[0,T^{(1)}]$ and $\Psi^{(2)}$ is the Poincar\'e map associated with System (2) for $t\in[0,T^{(2)}].$ Similar to the settings analyzed in Section \ref{sec-3}, the increasing monotonicity for both systems of the orbit period, denoted respectively by $\tau^{(1)}(e)$ and $\tau^{(2)}(h),$ with the energy levels $e>e_0^{(1)}$ and $h>h_0^{(2)},$ has been proven in \cite{Sc-85,Sc-90}.\\
We are now in position to state our result about System \eqref{biot} with $r_i(t)$ defined as in \eqref{rt} for $i\in\{x,y\}.$
\begin{theorem}\label{appbio}
For any choice of the positive parameters $\alpha,\,\beta,\,\gamma,\,\delta,\,K_x,\,K_y,$ with
$0<\alpha<\beta$ and $0<\gamma<\delta,$ let $\mathcal C^{(1)}(e_1,e_2)$ be an annulus around $S,$ for some $e_0^{(1)}<e_1<e_2,$ obtained when the intrinsic growth rates for the two populations assume values $r_x^{(1)}$ and $r_y^{(1)},$ respectively, and let $\mathcal C^{(2)}(h_1,h_2)$ be an annulus around $S,$ for some $h_0^{(2)}<h_1<h_2,$ obtained when the intrinsic growth rates assume values $r_x^{(2)}$ and $r_y^{(2)}.$
Assume that $r_x^{(1)}>r_x^{(2)},\,r_y^{(1)}<r_y^{(2)}$ and that $\mathcal C^{(1)}(e_1,e_2)$ and $\mathcal C^{(2)}(h_1,h_2)$ are linked together, calling $\mathcal R_j,\,j\in\{1,2,3,4\},$ the connected components of $\mathcal C^{(1)}(e_1,e_2)\cap\mathcal C^{(2)}(h_1,h_2).$
Then, if $T^{(1)}>\frac{9\,\tau^{(1)}(e_1)\tau^{(1)}(e_2)}{2\,(\tau^{(1)}(e_2)-\tau^{(1)}(e_1))}$ and 
$T^{(2)}>\frac{7\,\tau^{(2)}(h_1)\tau^{(2)}(h_2)}{2\,(\tau^{(2)}(h_2)-\tau^{(2)}(h_1))},$ the 
Poincar\'e map $\Psi=\Psi^{(2)}\circ\Psi^{(1)}$ of System \eqref{biot}, with $r_i(t)$ defined as in \eqref{rt} for $i\in\{x,y\},$ induces chaotic dynamics on two symbols in every $\mathcal R_j,\,j\in\{1,2,3,4\},$ and thus all the properties listed in Theorem \ref{th} are fulfilled for $\Psi$.
\end{theorem}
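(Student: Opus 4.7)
The proof follows the scheme of Theorem \ref{app18}, adapted for two geometric changes: the annuli $\mathcal{C}^{(1)}(e_1,e_2)$ and $\mathcal{C}^{(2)}(h_1,h_2)$ now share the same center $S$, and they intersect in four (rather than two) pairwise disjoint generalized rectangles $\mathcal{R}_1,\mathcal{R}_2,\mathcal{R}_3,\mathcal{R}_4$. The plan is to label these rectangles in counterclockwise cyclic order around $S$ and then, for each index $j\in\{1,2,3,4\}$, choose a target index $k=k(j)\neq j$ and verify conditions $(C_F)$ and $(C_G)$ of Theorem \ref{th} for the ordered pair $(\widetilde{\mathcal{R}}_j,\widetilde{\mathcal{R}}_k)$ with $F=\Psi^{(1)}$ and $G=\Psi^{(2)}$. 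Here $\mathcal{R}_j$ would be given its \emph{(1)-orientation}, with sides on $\Gamma^{(1)}(e_1)\cap\mathcal{R}_j$ and $\Gamma^{(1)}(e_2)\cap\mathcal{R}_j$, while $\mathcal{R}_k$ would carry its \emph{(2)-orientation}, with sides on $\Gamma^{(2)}(h_1)\cap\mathcal{R}_k$ and $\Gamma^{(2)}(h_2)\cap\mathcal{R}_k$. Since $\Psi=\Psi^{(2)}\circ\Psi^{(1)}$ is a homeomorphism on the open rectangle $(0,K_x)\times(0,K_y)$, this suffices by Theorem \ref{th} to yield chaotic dynamics on two symbols in each $\mathcal{R}_j$, and hence all the properties listed there.

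To verify the stretching relations, I would install a single system of generalized polar coordinates centered at $S$ and define rotation numbers $\rot^{(1)}$ and $\rot^{(2)}$ exactly as in \eqref{rns}, since both Systems (1) and (2) are traversed counterclockwise. The star-shapedness with respect to $S$ of the sublevel sets of $H^{(1)}$ and $H^{(2)}$ ensures that the analogue of \eqref{rot} holds for each rotation number, so the angular coordinate monotonically parametrises every orbit. The increasing monotonicity of $\tau^{(1)}(\cdot)$ and $\tau^{(2)}(\cdot)$ in the energy level, established in \cite{Sc-85,Sc-90}, together with the argument behind \eqref{rots}, yields for any path $\gamma:[0,1]\to\mathcal{R}_j$ joining the two (1)-sides of $\mathcal{R}_j$
$$
\rot^{(1)}(T^{(1)},\gamma(0))-\rot^{(1)}(T^{(1)},\gamma(1))>T^{(1)}\,\frac{\tau^{(1)}(e_2)-\tau^{(1)}(e_1)}{\tau^{(1)}(e_1)\,\tau^{(1)}(e_2)}-2>\frac{9}{2}-2=\frac{5}{2},
$$
and analogously, for any path $\sigma:[0,1]\to\mathcal{R}_k$ joining the two (2)-sides of $\mathcal{R}_k$, an inequality $\rot^{(2)}(T^{(2)},\sigma(0))-\rot^{(2)}(T^{(2)},\sigma(1))>3/2$. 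Because each $\mathcal{R}_j$ is contained in an angular sector at $S$ of extent strictly less than $\pi$, these two estimates translate into angular sweeps of $\theta(T^{(1)},\gamma(\cdot))$ exceeding $4\pi$ and of $\theta(T^{(2)},\sigma(\cdot))$ exceeding $2\pi$, respectively.

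These two bounds allow the bookkeeping argument used in the proof of Theorem \ref{app18} to be replicated almost verbatim. Since the four rectangles are cyclically arranged around $S$, each occupies an angular sector of extent strictly less than $\pi/2$; hence the first sweep contains two disjoint translates (by integer multiples of $2\pi$) of the angular interval occupied by $\mathcal{R}_k$, producing, via Bolzano's theorem, two disjoint maximal subintervals of $[0,1]$ whose preimages in $\mathcal{R}_j$ under $\theta(T^{(1)},\cdot)$ deliver compact sets $\mathcal{H}_0,\mathcal{H}_1\subseteq\mathcal{R}_j$ realising $(\mathcal{H}_i,\Psi^{(1)}):\widetilde{\mathcal{R}}_j\stretchx\widetilde{\mathcal{R}}_k$ for $i=0,1$. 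The monotonicity of $H^{(2)}$ along rays from $S$ guarantees, as in the passage after \eqref{sr} in Theorem \ref{app18}, that the endpoints of each restricted image lie on the $\Gamma^{(2)}(h_1)$- and $\Gamma^{(2)}(h_2)$-sides of $\mathcal{R}_k$ respectively. The second sweep, exceeding $2\pi$, analogously contains at least one full translate of the angular interval occupied by $\mathcal{R}_j$, yielding the stretching $\Psi^{(2)}:\widetilde{\mathcal{R}}_k\stretchx\widetilde{\mathcal{R}}_j$. Theorem \ref{th} applied to $(\widetilde{\mathcal{R}}_j,\widetilde{\mathcal{R}}_k)$ then produces chaotic dynamics on two symbols in $\mathcal{R}_j$, and varying $j\in\{1,2,3,4\}$ (choosing $k(j)$ as either of the two rectangles cyclically adjacent to $\mathcal{R}_j$) completes the proof.

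The main obstacle I anticipate is the bookkeeping in the angular-sector argument, namely verifying that the cyclic order of the $\mathcal{R}_j$'s as subsets of the plane matches the cyclic order of their angular projections at $S$ in a way that makes the reachability of a chosen $\mathcal{R}_k$ unambiguous. This reduces to checking that, under the linkage relation of Definition \ref{lim}, the four components of $\mathcal{C}^{(1)}(e_1,e_2)\cap\mathcal{C}^{(2)}(h_1,h_2)$ lie in four distinct and mutually disjoint angular sectors around $S$, a fact that follows from the very same star-shapedness of the sublevel sets of $H^{(1)}$ and $H^{(2)}$ used throughout the argument. Once this is granted, the constants $9/2$ and $7/2$ in the hypotheses on $T^{(1)}$ and $T^{(2)}$, smaller by one unit than the constants $11/2$ and $9/2$ of Theorem \ref{app18} precisely because a single rotation around $S$ now sweeps through all four rectangles, are exactly what is needed to close the argument; injectivity of $\Psi$ on the relevant set and the repetition of the argument with the roles of the annuli interchanged are then handled as in the final part of the proof of Theorem \ref{app18}.
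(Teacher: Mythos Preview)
Your overall strategy is correct and coincides with the paper's own treatment: the proof is declared ``omitted because of its similarity with that of Theorem~\ref{app18},'' with the sole remark that the smaller constants $9/2$ and $7/2$ are possible because each $\mathcal R_j$ lies in a single \emph{quadrant} about $S$ (angular extent at most $\pi/2$) rather than in a half-plane.

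There is, however, a real bookkeeping slip in your angular estimates. You first bound the extent of the \emph{source} rectangle by $\pi$, obtaining sweeps exceeding $4\pi$ (for $\Psi^{(1)}$) and $2\pi$ (for $\Psi^{(2)}$), and only afterwards invoke the sharper bound $\pi/2$ for the \emph{target}. But with a target window of width up to $\pi/2$, a sweep of length $4\pi$ does \emph{not} guarantee two $2\pi$-translates of that window: if the sweep begins just past the left endpoint of a window one must wait almost a full $2\pi$ before the next copy starts, so two full copies require length at least $4\pi+\pi/2=9\pi/2$; similarly a single copy requires at least $2\pi+\pi/2=5\pi/2$, not $2\pi$. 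The remedy is to use the quadrant bound $\pi/2$ consistently for the source as well: from $\rot^{(1)}$-difference $>5/2$ and initial angular spread $<\pi/2$ you get a $\theta$-sweep $>5\pi-\pi/2=9\pi/2$, and from $\rot^{(2)}$-difference $>3/2$ you get $>3\pi-\pi/2=5\pi/2$, which are exactly the thresholds needed. This quadrant-based tightening, not the cyclic reachability of adjacent rectangles, is what the paper identifies as the reason the constants drop by one unit relative to Theorem~\ref{app18}.
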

The proof is omitted because of its similarity with that of Theorem \ref{app18}. We just stress that the smaller constants in the lower bounds of 
$T^{(1)}$ and $T^{(2)}$ in the statement of Theorem \ref{appbio} with respect to those for $T_S$ and $T_V$ in Theorem \ref{app18} are due to the fact that it is here possible to be more precise with the estimates concerning the angular coordinate, since, after having introduced a system of generalized polar coordinates centered at $S,$ it holds that each generalized rectangle 
$\mathcal R_j,\,j\in\{1,2,3,4\},$ is contained in a single quadrant, rather than in a half-plane. For further details, see Section 4 in 
\cite{BuZa-09}, where the authors deal with a similar geometrical configuration. Moreover, as observed above, the number of chaotic sets increases with respect to the frameworks considered in Subsections \ref{31} and \ref{32}, due to the larger number of intersection sets between two linked together annuli. Let us illustrate the just described framework in the next example.

\begin{example}\label{bioer}
Taking $\alpha=16<\beta=32,\,\gamma=24<\delta=30,\,K_x=K_y=1,$ System \eqref{bio} has a center in $S=\left(\frac{\gamma}{\delta},\frac{\alpha}{\beta}\right)=(0.8,0.5),$ both with $r_x=r_x^{(1)}=2,\,r_y=r_y^{(1)}=0.5,$ and with $r_x=r_x^{(2)}=0.5,\,r_y=r_y^{(2)}=2.$
Since $r_x^{(1)}>r_x^{(2)}$ and $r_y^{(1)}<r_y^{(2)},$ the intrinsic growth rate of population $x$ is larger in the time interval $[0,T^{(1)}),$ while the intrinsic growth rate of population $y$ is larger for $t\in[T^{(1)},T).$ Moreover, in the considered case, a comparison between preys and predators shows that the intrinsic growth rate of population $x$ is larger than the one of population $y$ in the former time period, since $r_x^{(1)}>r_y^{(1)},$ and the situation is reversed in the latter time period, since $r_x^{(2)}<r_y^{(2)}.$
As shown in Figure \ref{biofr}, in the analyzed framework two linked together annuli $\mathcal C^{(1)}(e_1,e_2)$ and $\mathcal C^{(2)}(h_1,h_2)$ can be obtained for $e_1=53,\,e_2=56.9,\,h_1=43,\,h_2=44.4,$ intersecting in the four pairwise disjoint generalized rectangles denoted by $\mathcal R_j,\,j\in\{1,2,3,4\}.$ Software-assisted computations show that 
$\tau^{(1)}(e_1)\approx 1.055 <\tau^{(1)}(e_2)\approx 1.195$ and $\tau^{(2)}(h_1)\approx 1.06<\tau^{(2)}(h_2)\approx 1.095.$ Hence, Theorem \ref{appbio} guarantees the existence of chaotic dynamics for the Poincar\'e map $\Psi=\Psi^{(2)}\circ\Psi^{(1)}$ associated with System \eqref{biot} with $r_i(t)$ defined as in \eqref{rt} for $i\in\{x,y\}$ provided that 
$T^{(1)}>\frac{9\,\tau^{(1)}(e_1)\tau^{(1)}(e_2)}{2\,(\tau^{(1)}(e_2)-\tau^{(1)}(e_1))}\approx 40.52$ and 
$T^{(2)}>\frac{7\,\tau^{(2)}(h_1)\tau^{(2)}(h_2)}{2\,(\tau^{(2)}(h_2)-\tau^{(2)}(h_1))}\approx 116.07.$ Thus, considering a period of one year, and assuming, to a first approximation, that $T^{(1)}=T^{(2)}=\frac{365}{2},$ so that the length of the time interval in which preys have an higher intrinsic growth rate coincides with the length of the time interval in which predators have an higher intrinsic growth rate, it is possible to apply Theorem \ref{appbio} to conclude that $\Psi$ induces chaotic dynamics on two symbols in every $\mathcal R_j,\,j\in\{1,2,3,4\}.$
\end{example}

\begin{figure}[ht]
\centering
\includegraphics[width=5.5cm,height=6cm]{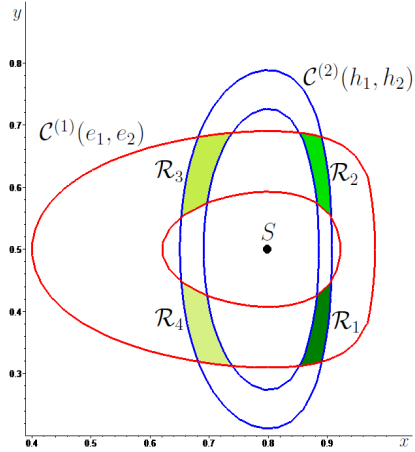}
\caption{The two linked together annuli $\mathcal C^{(1)}(e_1,e_2)$ (whose boundary is colored in red) and $\mathcal C^{(2)}(h_1,h_2)$ (whose boundary is colored in blue) considered in Example \ref{bioer}, together with the corresponding generalized rectangles $\mathcal R_j,\,j\in\{1,2,3,4\}.$}\label{biofr}
\end{figure}

Analogous considerations to those made above hold true when we deal with a variation in carrying capacities in \eqref{bio}, assuming that they alternate in a periodic fashion between certain values, denoted by $K_x^{(I)}$ and $K_y^{(I)},$ for $t\in[0,T^{(I)})$, and other values, denoted by $K_x^{(II)}$ and $K_y^{(II)},$ for $t\in[T^{(I)},T^{(I)}+T^{(II)}).$
Supposing that the same alternation between the two regimes occurs with $\mathcal T$-periodicity, where $\mathcal T=T^{(I)}+T^{(II)},$ we can assume that we are dealing with System \eqref{biot} with periodic coefficients,
where $c(t)\equiv c$ for $c\in\{r_x,r_y,\alpha,\beta,\gamma,\delta\}$ and for $K_i(t),$ with $i\in\{x,y\},$ it holds that
\begin{equation}\label{kt}
K_i(t)=\left\{
\begin{array}{ll}
K_i^{(I)} \quad & \mbox{for } \, t\in[0,T^{(I)}) \\
\vspace{-2mm}\\
K_i^{(II)} \quad & \mbox {for } \, t\in[T^{(I)},\mathcal T)
\end{array}
\right.
\end{equation} 
with $0<\alpha<\beta$ and $0<\gamma<\delta.$ The piecewise constant functions $K_i(t),$ with $i\in\{x,y\},$ are supposed to be extended to the whole real line by $\mathcal T$-periodicity. The center coincides with $S=\left(\frac{\gamma}{\delta},\frac{\alpha}{\beta}\right)$
 both when the carrying capacities assume values $K_x^{(I)}$ and $K_y^{(I)},$ and thus we are in the regime whose dynamics are governed by the system that we will call $(I),$ as well as when they assume values $K_x^{(II)}$ and $K_y^{(II)},$ and thus we are in the regime whose dynamics are governed by the system that we will call $(II).$ In the former case, the orbits have equation
$$
\begin{array}{lll}
H^{(I)}(x,y)&\!\!\!\! = &\!\!\!\!\frac{1}{r_y}\left(-\alpha\log(y)+(\alpha-\beta K_y^{(I)})\log(K_y^{(I)}-y)\right)+\\
&&\!\!\!\!\frac{1}{r_x}\left(-\gamma\log(x)+(\gamma-\delta K_x^{(I)})\log(K_x^{(I)}-x)\right)=e,
\end{array}
$$
for some $e\ge e_0^{(I)}=H^{(I)}(S),$ while, in the latter case, the orbits have equation
$$
\begin{array}{lll}
H^{(II)}(x,y)&\!\!\!\! = &\!\!\!\!\frac{1}{r_y}\left(-\alpha\log(y)+(\alpha-\beta K_y^{(II)})\log(K_y^{(II)}-y)\right)+\\
&&\!\!\!\!\frac{1}{r_x}\left(-\gamma\log(x)+(\gamma-\delta K_x^{(II)})\log(K_x^{(II)}-x)\right)=h,
\end{array}
$$
for some $h\ge h_0^{(II)}=H^{(II)}(S).$\\
Definition \ref{lim} of linked together annuli and the subsequent comments are valid for the present context, too, when adapting notation, by replacing (1) with $(I)$ and (2) with $(II).$ Also a result analogous to Theorem \ref{appbio} still hold true when considering variations in carrying capacities. We omit to report its precise statement, focusing instead on a numerical example in which it can be applied to show the existence of chaotic dynamics for the Poincar\'e map $\Psi=\Psi^{(II)}\circ\Psi^{(I)}$ associated with System \eqref{biot} with $K_i(t)$ defined as in \eqref{kt} for $i\in\{x,y\}.$

\begin{example}\label{bioek}
Taking $\alpha=64<\beta=128,\,\gamma=96<\delta=112,\,r_x=r_y=1,$ System \eqref{bio} has a center in $S=\left(\frac{\gamma}{\delta},\frac{\alpha}{\beta}\right)=(0.86,0.5),$ both with $K_x=K_x^{(I)}=0.99,\,K_y=K_y^{(I)}=0.9,$ and with $K_x=K_x^{(II)}=0.9,\,K_y=K_y^{(II)}=0.99.$
Similar to Example \ref{bioer}, since $K_x^{(I)}>K_x^{(II)}$ and $K_y^{(I)}<K_y^{(II)},$ the carrying capacity for population $x$ is larger in the time interval $[0,T^{(I)}),$ while the carrying capacity for population $y$ is larger for $t\in[T^{(I)},\mathcal T).$ Moreover, a comparison between preys and predators shows that the environment can carry a larger size of preys rather than of predators in the former time period, since $K_x^{(I)}>K_y^{(I)},$ and the situation is reversed in the latter time period, since $K_x^{(II)}<K_y^{(II)}.$
As shown in Figure \ref{biofk}, in the analyzed framework two linked together annuli $\mathcal C^{(I)}(e_1,e_2)$ and $\mathcal C^{(II)}(h_1,h_2)$ can be obtained for $e_1=145.3,\,e_2=146.7,\,h_1=129.4,\,h_2=131.1,$ intersecting in the four pairwise disjoint generalized rectangles denoted by $\mathcal R_j,\,j\in\{1,2,3,4\}.$ Software-assisted computations for the periods show that 
$\tau^{(I)}(e_1)\approx 0.355 <\tau^{(I)}(e_2)\approx 0.360$ and $\tau^{(II)}(h_1)\approx 0.635<\tau^{(II)}(h_2)\approx 0.650.$\footnote{Also in this setting, the increasing monotonicity of the period of the orbits follows from the results in \cite{Sc-85,Sc-90}.} Hence, a result analogous to Theorem \ref{appbio} guarantees the existence of chaotic dynamics for the Poincar\'e map $\Psi=\Psi^{(II)}\circ\Psi^{(I)}$ associated with System \eqref{biot} with $K_i(t)$ defined as in \eqref{kt} for $i\in\{x,y\}$ provided that 
$T^{(I)}>\frac{9\,\tau^{(I)}(e_1)\tau^{(I)}(e_2)}{2\,(\tau^{(I)}(e_2)-\tau^{(I)}(e_1))}\approx 115.02$ and 
$T^{(II)}>\frac{7\,\tau^{(II)}(h_1)\tau^{(II)}(h_2)}{2\,(\tau^{(II)}(h_2)-\tau^{(II)}(h_1))}\approx 96.31.$ Thus, considering a period of one year, and assuming e.g. that $T^{(I)}=T^{(II)}=\frac{365}{2},$ it is then possible to apply Theorem \ref{appbio} to conclude that $\Psi$ induces chaotic dynamics on two symbols in every $\mathcal R_j,\,j\in\{1,2,3,4\}.$
\end{example}

\begin{figure}[ht]
\centering
\includegraphics[width=5.5cm,height=6cm]{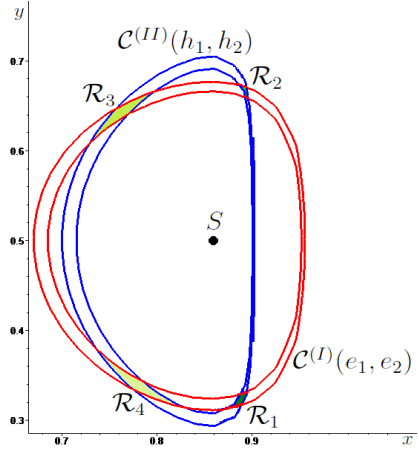}
\caption{The two linked together annuli $\mathcal C^{(I)}(e_1,e_2)$ (whose boundary is colored in red) and $\mathcal C^{(II)}(h_1,h_2)$ (whose boundary is colored in blue) considered in Example \ref{bioek}, together with the corresponding generalized rectangles $\mathcal R_j,\,j\in\{1,2,3,4\}.$}\label{biofk}
\end{figure}

\section{Conclusion}\label{sec-5}
In the present work, led by various possible interpretations of the same Hamiltonian system, we have shown how to apply the Linked Twist Maps (LTMs) method to prove the existence of chaotic dynamics for the associated Poincar\'e map, perturbing each time a different parameter in a periodic fashion and obtaining various geometrical configurations in the phase plane, when considering both the orbits of the original system and those of the perturbed one. 
Namely, we can try to use the LTMs technique whenever we have a conservative system with a nonisochronous center and it is sensible to assume a periodic variation for one or more model parameters\footnote{See e.g. \cite{Maea-10}, where the authors apply the LTMs method to a class of periodically perturbed
planar Hamiltonian systems, dealing also with perturbations by means of dissipative terms.}, both influencing the center or not, e.g. due to a seasonal effect. We just stress that, if the position of the center is not affected by the change in the considered parameter value, it is necessary to check that the shape of the orbits changes in a way that allows to detect at least two linked annuli.\\ 
We finally remark that in the present contribution we have proposed planar applications of the LTMs technique. Nonetheless, such method can be employed in (at least) three-dimensional frameworks, too, when dealing with linked together cylindrical sets. See \cite{RZZa-14} for a 3D non-Hamiltonian example concerning a predator-prey model in a periodically varying
environment. We will investigate a three-dimensional continuous-time model connected with some game theoretical context in a future work, taking inspiration e.g. from the 3D setting considered in \cite{Anea-19} (cf. Footnote 6).
In regard to economic settings, we recall that the SAP method, on which the LTMs technique is based, has been recently used to prove the existence of chaotic dynamics for some discrete-time triopoly game models in \cite{Pi-15,Pi-16}, while in \cite{Meea-09} the SAP technique had been applied to one- and two-dimensional discrete-time economic models, concerning overlapping generations and duopoly frameworks.

%
%

\section*{Acknowledgments}

\noindent
Many thanks to Professor J. Hofbauer for useful discussions about the monotonicity of the period of the orbits.

\newpage


\end{document}